\newtheorem{thm}{Theorem}{\bf}{\it}
\newtheorem{warning}[thm]{Warning}{\bf}{}
\newtheorem{prop}[thm]{Proposition}
\newtheorem{lem}[thm]{Lemma}
\newtheorem{cor}[thm]{Corollary}
\newtheorem{df}[thm]{Definition}{\bf}{}
\newtheorem{exam}[thm]{Example}{\bf}{}
\newcommand{\nn}{\mathbb{N}}
\newcommand{\zz}{\mathbb{Z}}
\newcommand{\catop}{\cat\op}
\newcommand{\cat}{\mathbf{C}}
\newcommand{\ox}{\mathcal{O}_X}
\newcommand{\fu}{\mathbb{F}_1}
\newcommand{\ra}{\rightarrow}
\newcommand{\mcf}{\mathcal{F}}
\newcommand{\mcg}{\mathcal{G}}
\newcommand{\mco}{\mathcal{O}}
\newcommand{\mfm}{\mathfrak{m}}
\newcommand{\mfp}{\mathfrak{p}}
\DeclareMathOperator{\colim}{colim}
\DeclareMathOperator{\Hom}{Hom}
\DeclareMathOperator{\HHom}{\bf{Hom}}
\DeclareMathOperator{\op}{^{op}}
\DeclareMathOperator{\Spec}{Spec}
\DeclareMathOperator{\Aff}{\bf{Aff}}
\DeclareMathOperator{\Alg}{\bf{-Alg}}
\DeclareMathOperator{\Mod}{\bf{-Mod}}
\DeclareMathOperator{\Mon}{\bf{Mon}}
\DeclareMathOperator{\MS}{\bf{MS}}
\DeclareMathOperator{\Ring}{\bf{Ring}}
\DeclareMathOperator{\Psh}{\bf{Psh}}
\DeclareMathOperator{\PshAff}{\bf{Psh(Aff)}}
\DeclareMathOperator{\Sch}{\bf{Sch}}
\DeclareMathOperator{\Set}{\bf{Set}}
\begin{document}

\title{Deitmar's versus To\"en-Vaquié's schemes over $\fu$}
\author{Alberto Vezzani}
\address{Department of Mathematics\\ Università degli Studi di Milano\\ Via C. Saldini 50\\I-20133 Milan\\ Italy}
\email{alberto.vezzani@unimi.it}

\begin{abstract}
Deitmar indtroduced schemes over $\fu$, the so-called ``field with one element'', as certain spaces with an attached sheaf of monoids, generalizing the definition of schemes as ringed spaces. On the other hand, To\"en and Vaquié defined them as particular Zariski sheaves over the opposite category of monoids, generalizing the definition of schemes as functors of points. We show the equivalence between Deitmar's and To\"en-Vaquié's notions and establish an analog of the classical case of schemes over $\zz$. 
This result has been assumed by the leading experts on $\fu$, but no proof was given. During the proof, we also conclude some new basic results on commutative algebra of monoids, such as a characterization of local flat epimorphisms and of flat epimorphisms of finite presentation. We also inspect the base-change functors from the category of schemes over $\fu$ to the category of schemes over $\zz$.

\end{abstract}

\maketitle

\section*{Introduction}

Although the ``field with one element'' was originally mentioned in 1956 by Tits \cite{tits}, it in fact emerged as an significant object to investigate in the '90s. Despite its youth, a lot of interesting constructions have been built out of studying $\fu$-geometry, especially in the last decade.
The interested reader may find excellent commentaries on the motivations of this theory in various papers, such as \cite{cohn}, \cite{deitmar}, \cite{durov}. We also refer to the beautiful article of J.\ L\'opez Pe\~na and O.\ Lorscheid \cite{penalorscheid}, in which the whole picture of the $\fu$-universe is presented. The $\fu$-geometry project has been considered too ambitious by many, since none of the big aims that motivated its introduction has been reached yet. That said, we have to specify that the theory itself has not been settled fully since a lot of different approaches 
have been made, and thus, it is still undergoing a continuous evolution. Moreover, it seems that some results in other parts of mathematics, such as combinatorics, can really be proven using the $\fu$-machinery. We also feel that some of the approaches to $\fu$-geometry, such as the ones we present in here, are undoubtedly elegant as well as natural, being in turn relevant on their own.

In this paper, we focus mainly on Deitmar's and To\"en-Vaquié's theory. The reason for this is that we show their equivalence, generalizing a classical result of Demazure and Gabriel (\cite{demazuregabriel} I.1.4.4) to $\fu$-geometry (Theorem \ref{thm:main2}). Indeed, this has been taken for granted by many (see the map in \cite{penalorscheid}), but only partial results were given. In particular, we find that the core of this fact (which is Theorem \ref{thm:3opens}), despite having a rather elementary proof, is not trivial. This result is strongly related to some facts on commutative monoids that generalize similar statements on commutative rings. However, the tools to be used are necessarily different. For instance, this is because the category of $M$-modules for a given monoid $M$ is not an abelian category. In developing such theory, we were hugely inspired by the classical duality of schemes: they can be seen either as ``geometrical'' beings - ringed spaces which are locally affine, or as ``functorial'' beings - Zariski sheaves on the opposite category of rings, which are locally affine. Our result can be generalized as a new proof of this equivalence that only partly overlaps with the classical one of Demazure and Gabriel.

\section*{Notation}
In all this work, a choice of a universe $\mathcal{U}$ is implicit, and all the categories we introduce must be thought as $\mathcal{U}$-small categories (see also \cite{schapira}, 1.1, 1.2). We indicate categories with bold fonts. The category of sets is denoted by $\Set$. For a given category $\cat$ and an object $X$ inside it, we write $\Psh(\cat)$ for the category $\Set^{\catop}$ of presheaves over $\cat$, $\cat_{/X}$ for the category of objects over $X$, and $^{X/}\!\cat$ for the category of objects under $X$. 

The word ``ring'' will indicate a commutative ring with unity unless otherwise specified. Also, maps of rings respect the unity elements, hence subrings have the same unity of the bigger ring. The category of rings will be denoted by $\Ring$.

Similarly, the word ``monoid'' will indicate a commutative monoid unless otherwise specified. The category of monoids will be denoted by $\Mon$.

A closed symmetric monoidal category in the sense of \cite{kelly} will be indicated with $(\cat,\otimes)$ omitting all the extra structure, the unit object will be indicated with $\mathbf{1}$ and the internal Hom functor with $\HHom$. The category of monoids in $(\cat,\otimes)$ will be denoted by $\Mon_\cat$. For a given monoid $A$ in $(\cat,\otimes)$, the category of modules over $A$ will be indicated with $A\Mod$, the category $^{A/}\!\Mon_\cat$ will be denoted by $A\Alg$ and its objects will be called $A$-algebras.

\section{Schemes over $\fu$ à la Deitmar}

The following definitions were presented by Kato in \cite{kato} and Deitmar in \cite{deitmar}. In the latter paper, the author shows that the operation of the sum in rings can be overlooked for many purposes, and some of the basic notions and facts of algebraic geometry can be straightforwardly generalized to a broader context. 

\begin{df}
In a monoid $M$, a subset $I$ is an \emph{ideal} if the set \[IM:=\{xm\colon x\in I, m\in M\}\] equals $I$, and it is \emph{prime} if $M\setminus I$ is a submonoid of $M$. The \emph{prime spectrum of $M$ over $\fu$} is the topological set of all prime ideals $\mfp$ of $M$, with the topology in which closed sets are of the form $V(I):=\{\mfp\colon I\subset\mfp\}$, where $I$ is a subset of $M$. It is indicated with $\Spec_{\fu}(M)$ (or simply with $\Spec M$ if the context is clear) and its topology is called the \emph{Zariski topology}. 
\end{df}

We can say that every monoid $M$ is local, in the sense that it has a unique maximal proper ideal, namely the subset of non-invertible elements $M\setminus M^\times$. It is obviously a prime ideal, and it is the only closed point of $\Spec M$. We also remark that $\Spec M$ has a basis of open sets constituted by the empty set and those of the form $D(a):=\{\mfp\colon a\notin\mfp\}$, where $a$ is an element of $M$. An open subset $D(x)$ is never empty since it contains the point $\emptyset$. In particular, since $D(a)\cap D(b)=D(ab)$, the space $\Spec M$ is irreducible. Also, we remark that every open covering includes the open subset $\Spec M$ itself, since the only open $D(a)$ that contains the maximal ideal is $D(1)=\Spec M$.

\begin{df}
A map $f\colon M\ra N$ of monoids is \emph{local} if $f(M\setminus M^\times)\subset N\setminus N^\times$, i.e. if $f^{-1}( N^\times)=M^\times$.
\end{df}

One of the main special features of prime spectra of rings is the structure sheaf, defined via localizations. Also in this setting, localizations can be defined using similar techniques. 

\begin{df}
For a subset $S$ of $M$, we call \emph{localization of $M$ at $S$} the monoid $S^{-1}M$ with a map $\pi\colon M\ra S^{-1}M$ which has the following universal property: for every map of monoids $f\colon M\ra N$ such that $f(S)\subset N^\times$, there exists a unique map $S^{-1}M\ra N$ that splits $f$ over $\pi$. If $S=\{a\}$, we indicate $S^{-1}M$ with $M_a$. If $S=M\setminus\mfp$ where $\mfp$ is a prime ideal, we indicate $S^{-1}M$ with $M_\mfp$. 
\end{df}

We remark that if two elements of $M$ are sent to units in $N$, so is their product. Also, the unity of $M$ is always mapped to the unity of $N$. We can then restrict ourselves to considering localizations with respect to submonoids $S$ of $M$. The result \cite{atmc} 3.1 can be generalized to prove that any localization $S^{-1}M$ is well defined, and has the following explicit description: as a set, $S^{-1}M$ is the set of formal fractions
\[
\left.\left\{\frac{a}{x}\colon a\in M, x\in S\right\}\right/\!\!\sim
\]
where $\frac{a}{x}\sim\frac{b}{y}$ if there exists an element $t\in S$ such that $ayt=bxt$. The monoid operation in $S^{-1}M$ is defined as $\frac{a}{x}\cdot\frac{b}{y}=\frac{ab}{xy}$ and the map of monoids $M\ra S^{-1}M$ is the map $a\mapsto\frac{a}{1}$.

\begin{df}
A \emph{monoidal space} is a pair $(X,\ox)$ consisting of a topological space $X$ and a sheaf of monoids $\ox$ on it. A morphism of monoidal spaces from $(X,\ox)$ to $(Y,\mco_Y)$ is a pair $(f,f^\sharp)$ where $f\colon X\ra Y$ is a continuous map and $f^\sharp\colon\mathcal{O}_Y\ra f_*\ox$ is a map of sheaves on $Y$ such that for every $x\in X$, the induced morphism of stalks \mbox{$f^\sharp_x\colon\mco_{Y,f(x)}\ra\mco_{X,x}$} is local. The category of monoidal spaces is denoted by $\MS$. 
\end{df}

\begin{prop}\label{prop:mscoc}
The category $\MS$ is cocomplete.
\end{prop}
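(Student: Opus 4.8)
The plan is to show that $\MS$ admits all small coproducts and all coequalizers of parallel pairs, which is well known to force cocompleteness (a general small colimit is then assembled as a coequalizer between two coproducts indexed by the objects and the morphisms of the diagram). The only real difficulty will be the locality condition on morphisms, exactly as in the classical theory of locally ringed spaces. To isolate it, I would first work in the larger category $\MS'$ of monoidal spaces with \emph{arbitrary} (not necessarily local) morphisms, where there is no constraint, and then descend to $\MS$.

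First I would record an elementary cancellation fact about monoids: if $a\colon A\ra B$ and $b\colon B\ra C$ are maps with $b$ and $b\circ a$ local, then $a$ is local, since $a^{-1}(B^\times)=a^{-1}(b^{-1}(C^\times))=(ba)^{-1}(C^\times)=A^\times$. Its geometric consequence is the key reduction: a colimit cocone computed in $\MS'$ whose structure maps are \emph{local} and jointly surjective on underlying spaces is automatically a colimit in $\MS$. Indeed, any cocone in $\MS$ to some $(W,\mathcal{O}_W)$ is a fortiori a cocone in $\MS'$, so it produces a unique comparison morphism $h$; since the composite of $h$ with each (local) structure map is the corresponding (local) cocone map, the cancellation fact applied to the stalk maps shows $h$ is local at every point of the jointly covered space, hence a morphism of $\MS$.

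Next I would build colimits in $\MS'$ by the standard recipe: the underlying space is $\colim X_i$ in $\Top$ with structure maps $u_i$, and the structure sheaf is the limit $\varprojlim_i (u_i)_*\mathcal{O}_i$ of the pushforwards, formed in sheaves of monoids on $\colim X_i$. This limit exists because $\Mon$ is complete, so sheaves of monoids are complete with limits computed sectionwise, and because each $f_*$ preserves limits; a short verification of the universal property (using that $f_*$ commutes with limits and that a cocone is a compatible family of maps out of the $\mathcal{O}_i$) identifies it as the $\MS'$-colimit. For a discrete diagram this is the disjoint union carrying the sheaf restricting to $\mathcal{O}_i$ on each summand, whose inclusions are isomorphisms on stalks, hence local; by the reduction, disjoint unions are coproducts in $\MS$ as well, the empty one being the initial object.

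The heart of the proof is the coequalizer of a parallel pair $f,g\colon(X,\ox)\rightrightarrows(Y,\mathcal{O}_Y)$ of \emph{local} morphisms. In $\MS'$ this is the topological coequalizer $\pi\colon Y\ra Z$ equipped with the equalizer sheaf $\mathcal{O}_Z=\mathrm{eq}\bigl(\pi_*\mathcal{O}_Y\rightrightarrows(\pi f)_*\ox\bigr)$ induced by $f^\sharp$ and $g^\sharp$. By the reduction it suffices to prove that the surjection $\pi$ is local, and this is the main obstacle, being the one place where locality of $f$ and $g$ is used. A germ at $z=\pi(y)$ is represented by a section $t$ of $\mathcal{O}_Y$ over some $\pi^{-1}(V)$ — automatically a saturated neighborhood of the whole fibre $\pi^{-1}(z)$ — satisfying the equalizer condition, and I must show that if its germ is invertible at one point of the fibre it is invertible throughout. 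The mechanism is that whenever $y'=f(x)$ and $y''=g(x)$ are identified, the equalizer condition gives $f^\sharp_x(t_{y'})=g^\sharp_x(t_{y''})$ in $\mathcal{O}_{X,x}$; invertibility of $t_{y'}$ makes this a unit, and since $g^\sharp_x$ is \emph{local} it forces $t_{y''}$ to be a unit. Iterating along the chains generating the equivalence relation spreads invertibility over the entire fibre, and the inverse — which again satisfies the equalizer condition because $f^\sharp,g^\sharp$ are monoid maps — yields an inverse germ in $\mathcal{O}_{Z,z}$. The delicate residual point, which I would handle with care, is purely point-set: passing from pointwise invertibility along the fibre to invertibility on a genuinely saturated open neighborhood, since $\pi$ need not be closed; once this is secured, $\pi$ is local and the cancellation reduction completes the argument.
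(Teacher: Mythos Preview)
The paper does not give a proof: it simply says the argument is the exact analogue of \cite{demazuregabriel}, I.1.1.6. Your sketch is precisely that analogue, so in spirit you are doing what the paper intends.

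One remark on the point you flag as ``delicate''. You worry about passing from stalkwise invertibility along the fibre $\pi^{-1}(z)$ to invertibility on a \emph{saturated} open neighbourhood. In fact no extra work is needed: the spreading argument you already gave shows more than you use. Let $W\subseteq\pi^{-1}(V)$ be the locus where the equalizing section $t$ has invertible stalk; this is open in $Y$. Your chain argument (using $f^\sharp_x(t_{f(x)})=g^\sharp_x(t_{g(x)})$ and locality of $f,g$) applies verbatim to any $y'\in W$, not only to points of a fixed fibre, and in both directions; hence $W$ is closed under the generating relation $f(x)\sim g(x)$ and therefore saturated. Thus $W=\pi^{-1}(V')$ for some open $V'\ni z$, the local inverses glue on $W$ by uniqueness of inverses, and the glued inverse again satisfies the equalizer condition. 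So $\pi$ is local and your reduction finishes the proof; there is no residual point-set difficulty.
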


\begin{proof}
The proof is the exact analogue of \cite{demazuregabriel}, I.1.1.6.
\end{proof}

\begin{prop}\label{prop:specadjmon}
Let $M$ be a monoid. There is a canonical structure of monoidal space on $\Spec_{\fu}\!\!M$ such that $\Spec_{\fu}$ defines a left adjoint of the functor of global sections $\Gamma$, seen as a functor from $\MS\op$ to $\Mon$. In particular, for any monoidal space $(X,\mco_X)$
\[
\Hom_{\Mon}(M,\Gamma(X,\mco_X))\cong\Hom_{\MS}(X,\Spec_{\fu}\!\! M).
\]
The sheaf $\mco_{\Spec_{\fu}\!\! M}$ is such that $\mco_{\Spec_{\fu}\!\! M}(D(a))=M_a$ for any element $a$ in $M$ and  \mbox{$\mco_{\Spec_{\fu}\!\! M,\mfp}=M_\mfp$} for any prime ideal $\mfp$ of $M$.
\end{prop}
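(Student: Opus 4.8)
The plan is to first endow $\Spec_{\fu} M$ with its structure sheaf, and then to produce the asserted bijection explicitly and check its naturality. I would build the sheaf on the basis of opens $\{D(a)\}_{a\in M}$ by setting $\mco(D(a)):=M_a$. For a basic inclusion $D(b)\subseteq D(a)$ — which is equivalent to $V(a)\subseteq V(b)$, i.e.\ to $b\in\sqrt{aM}=\{x\colon a\mid x^n \text{ for some }n\}$ — we get $a\mid b^n$, so the image of $a$ in $M_b$ divides the unit $b^n$ and is therefore itself a unit; the universal property of localization then yields a canonical restriction $M_a\ra M_b$, and uniqueness makes these functorial. To verify the sheaf axiom on the basis I would invoke the observation already recorded before the statement: the only basic open of $\Spec M_a$ containing its unique closed point is the whole space. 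Consequently, in any basic cover $D(a)=\bigcup_i D(a_i)$ (reduced to $D(a_i)\subseteq D(a)$) one member must equal $D(a)$, so the equalizer condition for $M_a\ra\prod_i M_{a_i}\rightrightarrows\prod_{i,j}M_{a_ia_j}$ is immediate. The standard extension of a sheaf from a basis then gives $\mco_{\Spec_{\fu}\!\! M}$ with $\mco(D(a))=M_a$, and the stalk at $\mfp$ is the directed colimit $\colim_{a\notin\mfp}M_a=M_\mfp$, since $\{a\colon a\notin\mfp\}$ is a directed multiplicative system with union $M\setminus\mfp$.

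For the adjunction, note first that $\Gamma(\Spec_{\fu}\!\! M,\mco_{\Spec_{\fu}\!\! M})=\mco(D(1))=M_1=M$. Given a monoid map $\varphi\colon M\ra\Gamma(X,\mco_X)$, I would compose with the germ map $\Gamma(X,\mco_X)\ra\mco_{X,x}$ to obtain $\varphi_x\colon M\ra\mco_{X,x}$, and set $f(x):=\varphi_x^{-1}(\mfm_x)$, where $\mfm_x$ is the maximal ideal of the automatically local stalk monoid $\mco_{X,x}$. Since the preimage of a prime ideal under a monoid homomorphism is again prime, $f(x)$ is a point of $\Spec_{\fu}\!\! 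M$.

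Next I would check continuity and construct $f^\sharp$. Here $f^{-1}(D(a))=\{x\colon\varphi_x(a)\in\mco_{X,x}^\times\}$ is the locus where the germ of the section $\varphi(a)$ is invertible, which is open; over this open set $\varphi(a)$ is in fact a unit of $\mco_X(f^{-1}(D(a)))$, because its stalkwise inverses glue — uniqueness of inverses in a monoid forcing them to agree on overlaps. The universal property of localization then factors $M\ra\mco_X(f^{-1}(D(a)))$ through $M_a$, defining $f^\sharp$ on the basis and hence as a map of sheaves, and by construction the induced stalk map $M_{f(x)}\ra\mco_{X,x}$ is local. In the other direction, a morphism $(f,f^\sharp)$ of monoidal spaces induces on global sections a map $M=\Gamma(\Spec_{\fu}\!\! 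M,\mco_{\Spec_{\fu}\!\! M})\ra\Gamma(X,\mco_X)$.

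Finally I would verify that these two assignments are mutually inverse and natural in both variables, the latter being routine bookkeeping. The main obstacle, and the conceptual heart of the argument, is the round-trip check: recovering $f$ from $\varphi$ relies essentially on the \emph{locality} built into the definition of a morphism in $\MS$. Indeed, the identity $f(x)=\varphi_x^{-1}(\mfm_x)$ holds precisely because $f^\sharp_x$ is local, which is what forces $\mfm_x$ to pull back to the maximal ideal of $M_{f(x)}$; without this hypothesis the point map $f$ could not be reconstructed from the algebra. The sheaf-theoretic facts about invertibility used above serve as the supporting lemmas for this step.
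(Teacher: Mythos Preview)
Your proposal is correct and is precisely the approach the paper intends: the paper's own proof is a one-line reference to \cite{demazuregabriel}, I.1.2.1, and what you have written is exactly that argument transported to monoids, including the construction of the structure sheaf on the basis $\{D(a)\}$ and the adjunction via $f(x)=\varphi_x^{-1}(\mfm_x)$. The one genuine simplification over the ring case---that every basic cover of $D(a)$ already contains $D(a)$ itself, so the sheaf condition is automatic---is the observation recorded just before the statement, and you invoke it correctly.
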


\begin{proof}
The proof is the exact analogue of \cite{demazuregabriel}, I.1.2.1.
 
\end{proof}

\begin{df}
Monoidal spaces which are isomorphic to $(\Spec_{\fu}\!\! M,\mco_{\Spec_{\fu}\!\! M})$ for some monoid $M$ are called \emph{affine geometrical \mbox{$\fu$-schemes}}. 
\end{df}

The previous proposition implies in particular that the functor $\Spec_{\fu}$ from monoids to affine geometrical $\fu$-schemes is part of a contravariant equivalence of categories.

\begin{df}\label{df:d-zaraff}
A map $(X,\mco_X)\ra(Y,\mco_Y)$ of $\MS$ is an \emph{open immersion} if it is the composite of an isomorphism and an open inclusion $(U,\mco_Y|_U)\hookrightarrow(Y,\mco_Y)$. A family of open immersions is a \emph{Zariski covering} if it is globally surjective on the topological spaces underneath. A \emph{geometrical $\fu$-scheme} (or \emph{scheme over $\fu$ à la Deitmar}) is a monoidal space $(X,\mathcal{O}_X)$ with an affine Zariski covering. The \emph{category of geometrical $\fu$-schemes} is the full subcategory of $\MS$ whose objects are geometrical $\fu$-schemes. 
It is easy to prove that Zariski coverings define a Grothendieck pretopology in the category of geometrical $\fu$-schemes. The site they form is called the \emph{Zariski site}.
\end{df}
The category of geometrical $\fu$-schemes is not cocomplete. Still, it has some colimits. In particular, it is straightforward to generalize the gluing lemma (\cite{hartshorne}, Exercise II.2.12) to this context. 

\begin{prop}\label{prop:zarissub}
The Zariski topology on geometrical $\fu$-schemes is subcanonical. Also, the category of affine geometrical $\fu$-schemes is dense in the category of $\fu$-schemes, in the sense that each geometrical $\fu$-scheme is a colimit of a diagram contained in the subcategory of affine geometrical $\fu$-schemes.
\end{prop}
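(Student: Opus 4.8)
The plan is to establish the two assertions in turn, obtaining the density statement as a formal consequence of subcanonicity.

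First I would prove that the topology is subcanonical by fixing a geometrical $\fu$-scheme $Z$ and checking that the representable presheaf $X\mapsto\Hom_{\MS}(X,Z)$ satisfies the sheaf condition for every Zariski covering $\{U_i\hookrightarrow X\}$. Given morphisms $f_i\colon U_i\to Z$ that agree on the overlaps $U_i\cap U_j$, I would glue them: the underlying continuous maps patch to a single continuous $f\colon X\to Z$ since the $U_i$ cover $X$ and the $f_i$ agree on intersections, while the sheaf morphisms $f_i^\sharp$ patch to a morphism $f^\sharp\colon\mco_Z\to f_*\mco_X$ because $\mco_X$ is a sheaf. Uniqueness is immediate, as a morphism of monoidal spaces is determined by its restriction to any open cover. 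The one point needing care is the locality condition: I would observe that whether $f^\sharp_x$ is local depends only on the induced map of stalks at $x$, hence only on the restriction to some open neighbourhood of $x$, so it is inherited from the $f_i$, which are local by hypothesis. This shows $h_Z$ is a sheaf, i.e.\ the Zariski topology is subcanonical.

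For density, given a geometrical $\fu$-scheme $X$, I would let $P$ be the poset of affine open subschemes of $X$ ordered by inclusion, regarded as a diagram in the full subcategory of affine geometrical $\fu$-schemes via the open immersions into $X$. I claim $X=\colim_{U\in P}U$ in the category of $\fu$-schemes. Since the category of $\fu$-schemes need not be cocomplete, I would verify this directly through the universal property: for an arbitrary $\fu$-scheme $Y$, restriction should induce a bijection
\[
\Hom(X,Y)\;\xrightarrow{\;\sim\;}\;\varprojlim_{U\in P}\Hom(U,Y).
\]
A compatible family $(f_U)_{U\in P}$ consists of morphisms agreeing along inclusions; because the basic opens $D(a)=\Spec_{\fu}M_a$ are affine and form a basis of each affine chart, the affine opens cover $X$ and cover every intersection $U\cap U'$. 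Hence for affine opens $W\subseteq U\cap U'$ one has $f_U|_W=f_W=f_{U'}|_W$, and the sheaf property from the first part forces $f_U$ and $f_{U'}$ to agree on $U\cap U'$; a further application of subcanonicity glues the family to a unique $f\colon X\to Y$ restricting to each $f_U$. This is exactly the asserted realization of $X$ as a colimit of affines, and it is compatible with the colimits in $\MS$ guaranteed by Proposition~\ref{prop:mscoc}.

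The main obstacle is the gluing step in the subcanonicity argument, specifically the verification that the locally defined morphisms $f_i^\sharp$ patch to a global morphism whose stalk maps remain local. Once this stalkwise argument is settled, both the sheaf property and the colimit description follow essentially formally, the latter because a subcanonical topology lets every covering diagram compute its object, and the affine opens furnish such a diagram thanks to the basis property of the spectra $\Spec_{\fu}M$.
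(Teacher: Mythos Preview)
Your argument is correct. The difference from the paper is mainly one of packaging and of which diagram you use for the density claim.

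The paper handles both assertions at once by writing down the \v{C}ech-type coequalizers
\[
\coprod U_i\cap U_j\rightrightarrows\coprod U_i\ra X,\qquad
\coprod \Spec A_{ijk}\rightrightarrows\coprod \Spec M_i\ra X
\]
for a chosen affine cover $\{U_i=\Spec M_i\}$ and chosen affine covers of the overlaps, and simply notes that both are coequalizing. The first coequalizer is equivalent to the sheaf condition for every representable $h_Z$ (subcanonicity), while the second exhibits $X$ as a colimit of an affine diagram (density). So the paper's proof is essentially a one-line compression of your gluing argument, together with an explicit small coequalizer presentation rather than a colimit over the poset of all affine opens.

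What your approach buys is explicitness: you actually carry out the gluing of $(f,f^\sharp)$ and isolate the stalkwise locality check, which the paper leaves implicit. Your density argument is also slightly different in shape, deducing the colimit description from the subcanonicity already established, over the filtered poset $P$ of all affine opens. The paper's coequalizer presentation is more economical and is exactly the form reused later in the proof of Theorem~\ref{thm:main2}; your poset presentation would serve just as well there, but would need to be matched against the co-Yoneda diagram $\Aff_{/X}$ used in that proof.
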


\begin{proof}
Suppose that $\{U_i=\Spec M_i\ra X\}$ is a Zariski covering of $X$. Let $\{\Spec A_{ijk}\ra U_i\cap U_j\}$ be coverings of the schemes $U_i\cap U_j$. Then the following are coequalizing diagrams
\[\coprod U_i\cap U_j\rightrightarrows\coprod U_i\ra X\]
\[\coprod \Spec A_{ijk}\rightrightarrows\coprod \Spec M_i\ra X\]
and this implies the claim. 
 
\end{proof}

As in the case of ordinary schemes, the category of geometrical $\fu$-schemes has pullbacks (also called fibered products), and affine geometrical $\fu$-schemes are closed under pullbacks (\cite{deitmar}, 3.1).

In the classical case of schemes, the spectrum of a ring can be defined though a colimit using $K$-points, as $K$ varies among the fields (\cite{demazuregabriel}). In the case of monoids, the naive attempt would be to consider the $G$-points as $G$ runs through the category of groups. This does not work, as the following remark specifies.

\begin{prop}
Let $G$ be an abelian group and $X$ a monoidal space. Defining a $G$-point on $X$ is the same as considering a point $x$ of $X$ such that $\mco_{X,x}$ is a group, together with a group homomorphism $\mco_{X,x}\ra G$.
\end{prop}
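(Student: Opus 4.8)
The plan is to interpret a $G$-point on $X$ as a morphism $\Spec_{\fu} G \ra X$ in $\MS$, in direct analogy with the classical definition of a $K$-point as a morphism $\Spec K \ra X$. The first step is therefore to compute $\Spec_{\fu} G$ for an abelian group $G$. Since every element of $G$ is invertible, any nonempty ideal $I$ contains some $x$ and hence $x^{-1}x = 1$, so it contains $1\cdot g = g$ for all $g \in G$ and equals $G$; but $G$ itself is not prime, its complement being empty. Thus $\emptyset$ is the unique prime ideal, so $\Spec_{\fu} G$ is a one-point space, and by Proposition \ref{prop:specadjmon} its structure sheaf has stalk $G_\emptyset = G$ at that point (the localization being trivial, as $G$ is already a group).

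Next I would unwind the data of a morphism $(f,f^\sharp)\colon \Spec_{\fu} G \ra X$. Since the source is a single point, $f$ amounts to the choice of a point $x \in X$, and $f_*\mco_{\Spec_{\fu} G}$ is the skyscraper sheaf at $x$ with value $G$: its sections over an open $U$ are $G$ when $x \in U$ and the trivial monoid otherwise. By the adjunction between the stalk at $x$ (inverse image along the inclusion of $x$) and the skyscraper functor, giving a sheaf map $f^\sharp\colon \mco_X \ra f_*\mco_{\Spec_{\fu} G}$ is the same as giving a single monoid homomorphism $\phi\colon \mco_{X,x} \ra G$ on stalks, and the morphism condition in $\MS$ says exactly that $\phi$ is local.

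The crux of the argument, carrying all the content, is the observation that for a group $G$ the locality of $\phi$ is equivalent to $\mco_{X,x}$ being a group. Indeed, locality means $\phi^{-1}(G^\times) = (\mco_{X,x})^\times$; but $G^\times = G$ since $G$ is a group, so the left-hand side is all of $\mco_{X,x}$, forcing $(\mco_{X,x})^\times = \mco_{X,x}$. Conversely, if $\mco_{X,x}$ is a group then $(\mco_{X,x})^\times = \mco_{X,x} = \phi^{-1}(G) = \phi^{-1}(G^\times)$ automatically, so $\phi$ is local, and being a monoid map between groups it is a group homomorphism. Assembling these identifications gives the claimed bijection between $G$-points and pairs $(x,\phi)$ with $\mco_{X,x}$ a group and $\phi$ a group homomorphism into $G$.

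The main obstacle I anticipate is bookkeeping rather than conceptual: one must check that the sheaf morphism $f^\sharp$ is genuinely determined by, and free on, the single stalk map $\phi$, i.e. that the skyscraper adjunction is available for sheaves of monoids over an arbitrary topological space (here $X$ is only assumed to be a monoidal space, not a scheme). Once this adjunction is in place the remaining steps are immediate, the computation of $\Spec_{\fu} G$ and the locality-versus-group dichotomy being elementary.
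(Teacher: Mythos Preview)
Your proposal is correct and follows essentially the same approach as the paper: both identify $\Spec_{\fu} G$ as a one-point space, reduce the sheaf map to a single stalk map $\mco_{X,x}\ra G$, and observe that locality forces $\mco_{X,x}$ to be a group. The only difference is packaging: you invoke the stalk--skyscraper adjunction abstractly, whereas the paper spells it out by hand, composing $\mco_X(U)\ra\mco_{X,x}\ra G$ for $x\in U$ and taking the trivial map otherwise---which is precisely the verification you flagged as the anticipated obstacle.
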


\begin{proof}
Suppose that $f$ is a map from $\Spec_{\fu}\!\! G$ to $X$. Since a group has only one prime ideal $\emptyset$, the map $f$ defines automatically a point $x=f(\emptyset)$ in $X$. Adding to this, it defines a local map of monoids $\mco_{X,x}\ra G$. The fact that this map is local implies that all elements of $\mco_{X,x}$ are invertible, as wanted.
Conversely, given a point $x$ and a homomorphism $\mco_{X,x}\ra G$, we can define a map between topological spaces that sends the unique point of $\Spec_{\fu}\!\!G$ to $x$. Note that the map $\mco_{X,x}=\varinjlim_{x\in U}\mco_X(U)\ra G$ induces maps $\mco_X(U)\ra G$ for every $U$ such that $x\in U$. Together with the trivial maps $\mco_X(U)\ra1$ for those open subsets $U$ that do not contain $x$, they define a map of sheaves $\mco_X\ra f_*\Spec_{\fu}\!\! G$, as wanted.
 
\end{proof}

In particular, we conclude that $G$-points on monoidal spaces are rare to find, so that there is no possibility to recover the topological space beneath just by using them. 

\section{Schemes over $\fu$ à la To\"en-Vaquié}
We now present the generalization of the concept of schemes introduced by To\"en and Vaquié in their paper \cite{toenvaquie}. One of the main advantages of this approach is its generality. The way new schemes are introduced is purely categorical and the case of $\fu$ is just a particular case of a more general picture, in which the protagonists are well-behaved monoidal categories.

From now on, we will consider a closed symmetric monoidal category $(\cat,\otimes)$ with unit $\mathbf{1}$ and inner Hom functor $\HHom$, which is complete and cocomplete. We know in particular that the tensor product commutes with colimits, because it has a right adjoint.

\begin{df}
Let $A$ be an object of $\Mon_\cat$, and let $M$, $N$ be objects of $A\Mod$ with actions $\varphi$, $\psi$ respectively. We define the \emph{tensor product of $M$ and $N$ over $A$}, and we indicate it with $M\otimes_A N$, the coequalizer in the diagram 
$$\xymatrix{
A\otimes M\otimes N\ar@<0.5ex>[r]^{\varphi\otimes N}\ar@<-0.5ex>[r]_{\psi\otimes M}&M\otimes N.
}$$
It has a natural $A$-module structure.
\end{df}

It is easy to prove the following sequence of facts.

\begin{prop}\label{prop:factsontensor}
Consider a map $f\colon A\ra B$ in $\Mon_\cat$.
\begin{enumerate}
	\item There is a natural forgetful functor $B\Mod\ra A\Mod$ that sends an object $N$ to $N$ itself, considered as a $A$-module with the action defined as the composite
	\[A\otimes N\ra B\otimes N\ra N.\]
	In particular, the map $f$ defines a natural structure of \mbox{$A$-module} on $B$, with the action defined as above.
	\item The forgetful functor has a left adjoint, indicated with $\otimes_A B$, which sends a \mbox{$A$-module} $M$ to $M\otimes_A B$, with a suitable \mbox{$B$-action}.  
	\item The forgetful functor has a right adjoint, which sends a $A$-module $M$ to $\HHom(B,M)$, with a suitable $B$-action. 
	\item \label{push}The pushout in $\Mon_\cat$ of a diagram \mbox{$B\leftarrow A\ra C$} is isomorphic as $A$-module to $B\otimes_A C$.
\end{enumerate}
\end{prop}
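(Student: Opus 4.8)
The plan is to treat the four statements in order, feeding the module structure produced by each part into the next; the only tools needed are the coequalizer definition of $\otimes_A$ together with its universal property, the tensor--hom adjunction $\Hom_\cat(X\otimes Y,Z)\cong\Hom_\cat(X,\HHom(Y,Z))$ coming from closedness, the fact (already noted) that $-\otimes X$ preserves colimits and in particular coequalizers, and the coherence isomorphisms of $(\cat,\otimes)$. For (1) I would give a $B$-module $N$ with action $\psi$ the $A$-action defined by the stated composite $A\otimes N\xrightarrow{f\otimes N}B\otimes N\xrightarrow{\psi}N$ and check the two module axioms: unitality reduces to $f$ preserving units together with the unit axiom for $\psi$, and associativity reduces to $f$ respecting the multiplications together with associativity of $\psi$. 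A $B$-linear map commutes with $\psi$, hence with the induced $A$-action, so it is automatically $A$-linear; this gives functoriality, and taking $N=B$ with its own multiplication yields the $A$-module structure on $B$.

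\emph{Part (2).} First I would endow $M\otimes_A B$ with a $B$-module structure: the right multiplication $M\otimes(B\otimes B)\xrightarrow{M\otimes\mu_B}M\otimes B$ is compatible with the two arrows defining $M\otimes_A B$ (here one uses that the $A$-action on $B$ is $\mu_B\circ(f\otimes B)$ and the associativity of $\mu_B$), and since $-\otimes B$ preserves the coequalizer it descends to an action on $M\otimes_A B$. The adjunction $\Hom_{B\Mod}(M\otimes_A B,N)\cong\Hom_{A\Mod}(M,N)$ is then read off the universal properties: a $B$-linear map out of $M\otimes_A B$ is the same as a map $M\otimes B\to N$ coequalizing the two arrows and compatible with the $B$-actions, and precomposing with $M\cong M\otimes\mathbf 1\xrightarrow{M\otimes u}M\otimes B\to M\otimes_A B$, where $u\colon\mathbf 1\to B$ is the unit of $B$, produces an $A$-linear map $M\to N$. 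The inverse sends an $A$-linear $h$ to the $B$-linear extension adjoint to $M\otimes B\xrightarrow{h\otimes B}N\otimes B\to N$, the last map being the action of $B$ on $N$. That these are mutually inverse and natural in $M$ and $N$ is a direct chase; this is extension of scalars.

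\emph{Part (3).} This is the analogue on the other side. I would give $\HHom(B,M)$, understood as the $A$-module of $A$-linear maps, the $B$-action induced by the multiplication of $B$ (informally $(b\cdot\phi)(b')=\phi(bb')$), defined as the transpose of the composite that multiplies the two copies of $B$ before evaluating. The adjunction $\Hom_{B\Mod}(N,\HHom(B,M))\cong\Hom_{A\Mod}(N,M)$ then comes from the tensor--hom adjunction: the forward map composes a $B$-linear $g$ with evaluation at the unit $u\colon\mathbf 1\to B$, and the inverse sends an $A$-linear $h\colon N\to M$ to the transpose of $B\otimes N\xrightarrow{\psi_N}N\xrightarrow{h}M$. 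Checking that this transpose indeed lands among $A$-linear maps and is $B$-linear, and that $g$ is recovered, uses precisely the $A$-linearity of $h$ and the $B$-linearity of $g$; this is coextension of scalars.

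\emph{Part (4).} When $B$ and $C$ are $A$-algebras I would first make $B\otimes_A C$ into a monoid: the multiplication is induced by $B\otimes C\otimes B\otimes C\to B\otimes B\otimes C\otimes C\xrightarrow{\mu_B\otimes\mu_C}B\otimes C$ after reordering by the symmetry, it descends to $(B\otimes_A C)\otimes(B\otimes_A C)$ because $-\otimes-$ preserves coequalizers in each variable, and it is associative, unital (unit $\mathbf 1\xrightarrow{u_B\otimes u_C}B\otimes C\to B\otimes_A C$) and commutative by the corresponding properties of $\mu_B,\mu_C$ and the symmetry. The two insertions $B\to B\otimes_A C$ and $C\to B\otimes_A C$ are then $A$-algebra maps agreeing over $A$. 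To identify this with the pushout I would verify the universal property: given $D\in\Mon_\cat$ with $A$-algebra maps $g\colon B\to D$ and $h\colon C\to D$, the composite $B\otimes C\xrightarrow{g\otimes h}D\otimes D\xrightarrow{\mu_D}D$ is a monoid map coequalizing the defining arrows of $B\otimes_A C$ (because $g$ and $h$ agree on $A$), hence factors uniquely through $B\otimes_A C$, and this factorization is the unique $A$-algebra map restricting to $g$ and $h$; the assertion about $A$-modules is then the underlying statement. As throughout, the substantive content is the well-definedness of the structure maps on the coequalizer and the module or monoid axioms, both of which hinge on commutativity and the symmetric braiding of $\otimes$; this is the one recurring point I expect to require genuine care, the adjunction bijections themselves being formal consequences of the universal properties.
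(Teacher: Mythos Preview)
Your proposal is correct and complete. The paper itself gives no proof of this proposition: it is introduced with the sentence ``It is easy to prove the following sequence of facts'' and then passes directly to the corollary, so there is nothing to compare against beyond noting that your argument supplies exactly the routine verifications the author chose to omit.

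One small remark on part (3): you rightly read $\HHom(B,M)$ as the internal hom taken in $A\Mod$ (i.e.\ the object of $A$-linear maps), rather than the bare internal hom in $\cat$; only with that reading is the stated right adjoint correct, and your construction of the $B$-action and of the adjunction bijection is the standard coextension-of-scalars argument. Everything else---the descent of the $B$-action to the coequalizer in (2), and the monoid structure and pushout verification in (4)---is handled correctly and with the appropriate care about the symmetry and the preservation of coequalizers by $\otimes$.
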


In particular, for an object $A$ of $\Mon_\cat$, and for an object $M$ of $A\Mod$, $M\otimes_AA$ is canonically isomorphic to $M$ since both $\otimes_AA$ and the identity itself are left adjoint functors of the identity.

\begin{cor}\label{cor:adjalg}
Let $A\ra B$ be a map of $\Mon_\cat$. The forgetful functor $B\Alg\ra A\Alg$ has a left adjoint, which maps $A\ra X$ to $B\ra B\otimes_AX$ with the monoid structure induced by the isomorphism $B\otimes_AX\cong B\sqcup_AX$.
\end{cor}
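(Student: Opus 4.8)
The plan is to recognize this as an instance of the standard fact that, in any category with pushouts, base change along a morphism is left adjoint to restriction along it, and then to read off the underlying $A$-module structure from Proposition \ref{prop:factsontensor}(\ref{push}).

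First I would make the forgetful functor explicit. Writing $A\Alg={}^{A/}\Mon_\cat$ and $B\Alg={}^{B/}\Mon_\cat$, the given map $f\colon A\ra B$ is itself an object of $A\Alg$, and the forgetful functor $U\colon B\Alg\ra A\Alg$ sends a $B$-algebra $B\ra Y$ to the $A$-algebra obtained by precomposing its structure map with $f$, namely $A\ra B\ra Y$.

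Next I would define the candidate left adjoint $F$ and verify the adjunction directly via universal properties. Given an $A$-algebra $A\ra X$, the pushout $B\sqcup_A X$ in $\Mon_\cat$ exists, since it is exactly the colimit appearing in Proposition \ref{prop:factsontensor}(\ref{push}), and I set $F(X)$ to be $B\sqcup_A X$ equipped with the canonical coprojection $B\ra B\sqcup_A X$ as its $B$-algebra structure. To check $F\dashv U$, fix a $B$-algebra $B\ra Y$ and analyse a morphism $g\colon B\sqcup_A X\ra Y$ in $B\Alg$. By the universal property of the pushout, $g$ amounts to a pair of monoid maps $B\ra Y$ and $X\ra Y$ agreeing after restriction to $A$; the requirement that $g$ be a morphism of $B$-algebras forces the first of these to be the structure map of $Y$, and then the agreement condition says precisely that the second map $X\ra Y$ is compatible with the $A$-algebra structure of $U(Y)$. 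Hence there is a bijection
\[
\Hom_{B\Alg}(B\sqcup_A X,\,Y)\cong\Hom_{A\Alg}(X,\,U(Y)),
\]
which one checks routinely to be natural in both variables, so $F$ is left adjoint to $U$.

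Finally, I would identify the underlying module. Proposition \ref{prop:factsontensor}(\ref{push}) supplies a canonical isomorphism of $A$-modules $B\sqcup_A X\cong B\otimes_A X$, which transports the monoid structure to $B\otimes_A X$ and yields the description in the statement: the left adjoint sends $A\ra X$ to $B\ra B\otimes_A X$. The main (and essentially only) obstacle is the bookkeeping in the adjunction isomorphism, where one must be careful about which structure maps are held fixed, so that the $B$-algebra condition pins down the $B$-component while leaving exactly a map of $A$-algebras on the $X$-component; everything else reduces to the already-established existence of pushouts in $\Mon_\cat$ and to Proposition \ref{prop:factsontensor}(\ref{push}).
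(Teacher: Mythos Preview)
Your argument is correct and is exactly the one the paper intends: the corollary is stated without proof, as an immediate consequence of Proposition~\ref{prop:factsontensor}(\ref{push}), and you have simply spelled out the standard adjunction between restriction and pushout in coslice categories together with the identification $B\sqcup_A X\cong B\otimes_A X$.
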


\begin{df}\label{df:affsch}
The opposite category of the category of $\Mon_\cat$ is denoted by $\Aff_\cat$, and its objects are called \emph{affine schemes relative to $\cat$}. We call $\Spec A$ the object in $\Aff_\cat$ which corresponds to the monoid $A$ in $\Mon_\cat$.
\end{df}

It is now high time to introduce the Zariski topology on the category of affine schemes. 

\begin{df}\label{df:openimm1}
Suppose that $f\colon A\ra B$ is a map in $\Mon_\cat$. It is \emph{flat} if the functor $\otimes_A B$ from $A\Mod$ to $B\Mod$ is exact (equivalently, left exact) in the sense that it commutes with finite limits and colimits. The map $f$ is \emph{of finite presentation} if for every direct system $\{C_i\}_{i\in I}$ of $A$-algebras, the canonical map
	\[
	\varinjlim\Hom_{A\Alg}(B,C_i)\ra\Hom_{A\Alg}(B,\varinjlim C_i)
	\]
	is bijective.
A map $\Spec B\ra\Spec A$ is an \emph{open immersion} if the correspondent map $A\ra B$ is a flat epimorphism of finite presentation, and a collection of open immersions $\{\Spec A_i\ra\Spec A\}_{i\in I}$ is a \emph{Zariski covering} if there is a finite subset $J\subset I$ such that the collection $\{\Spec A_j\ra\Spec A\}_{j\in J}$ reflects isomorphisms of modules, in the sense that any map of $A$-modules \mbox{$M\ra N$} is an isomorphism if and only if the induced maps \mbox{$M\otimes_A A_j\ra N\otimes_A A_j$} are isomorphisms, for all $j\in J$.
\end{df}

It is easy to prove that Zariski coverings define a Grothendieck pretopology on $\Aff_\cat$, and the site they form is again called the \emph{Zariski site}.

\begin{prop}\label{prop:dgistoen}
In case $(\cat,\otimes)$ is the category of abelian groups with the tensor product $\otimes_\zz$, then the Zariski site on $\Aff_\cat$ is equivalent to the Zariski site on affine schemes.
\end{prop}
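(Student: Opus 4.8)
The plan is to observe first that for $(\cat,\otimes)=(\Ab,\otimes_\zz)$ a (commutative) monoid object is exactly a commutative ring, so that $\Mon_\cat=\Ring$ and hence $\Aff_\cat=\Ring^{\mathrm{op}}$ is literally the category $\Aff$ of affine schemes. Thus the underlying categories of the two sites already agree, and proving the equivalence of sites reduces to checking that the two Grothendieck pretopologies coincide. Since a covering is by definition a family of open immersions satisfying a joint condition, I would split this into two independent verifications: (a) that the notion of open immersion of Definition \ref{df:openimm1} coincides with the scheme-theoretic one, and (b) that, granting (a), the joint condition of Definition \ref{df:openimm1} matches joint surjectivity. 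I would also record at the outset that, for a ring $A$, the categorical finite-presentation condition of Definition \ref{df:openimm1} (commutation of $\Hom_{A\Alg}(B,-)$ with filtered colimits) is equivalent to $B$ being a finitely presented $A$-algebra $B\cong A[x_1,\dots,x_n]/(f_1,\dots,f_m)$ in the usual sense; this is a standard compact-object computation.

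For step (a) I would use the dictionary between ring maps and affine morphisms. A map $A\to B$ is an epimorphism in $\Ring$ if and only if the multiplication $B\otimes_A B\to B$ is an isomorphism, which is precisely the condition that $\Spec B\to\Spec A$ be a monomorphism of schemes; and flatness of $\otimes_A B$ in the sense of Definition \ref{df:openimm1} is ordinary flatness of $B$ over $A$ (left-exactness is the only content, right-exactness being automatic for a left adjoint). Combining these with the finite-presentation remark, a map $A\to B$ is a flat epimorphism of finite presentation exactly when $\Spec B\to\Spec A$ is a flat monomorphism of finite presentation; and by the classical characterization of open immersions (a morphism locally of finite presentation is an open immersion if and only if it is a flat monomorphism) this is equivalent to $\Spec B\to\Spec A$ being an open immersion. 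The converse inclusion is automatic: an open immersion of affine schemes is flat, is a monomorphism, and is quasi-compact and locally of finite presentation (its source is the quasi-compact space $\Spec B$), hence of finite presentation. This identifies the two classes of open immersions.

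For step (b) I would reduce the joint condition to faithful flatness. Given a finite family of open immersions $\{\Spec A_j\to\Spec A\}_{j\in J}$, set $B:=\prod_{j\in J}A_j$; this is flat over $A$ and satisfies $M\otimes_A B\cong\prod_{j\in J}(M\otimes_A A_j)$, so the family reflects isomorphisms of $A$-modules if and only if the base-change functor $\otimes_A B$ is conservative. Since $B$ is flat, an exact-functor argument (an exact functor reflects isomorphisms iff it reflects the zero object, i.e.\ $M\otimes_A B=0\Rightarrow M=0$) shows this is equivalent to $B$ being faithfully flat over $A$, which in turn is equivalent to $\Spec B=\coprod_{j\in J}\Spec A_j\to\Spec A$ being surjective, i.e.\ to the family being jointly surjective. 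Finally, because $\Spec A$ is quasi-compact, an arbitrary jointly surjective family of open immersions admits a finite jointly surjective subfamily, and conversely; this matches the ``there exists a finite $J$'' quantifier in Definition \ref{df:openimm1}. Combining (a) and (b), the coverings of the two sites coincide, which gives the claimed equivalence of sites.

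The step I expect to be the main obstacle is (a): the identification of open immersions rests on the nontrivial classical theorem that a flat monomorphism of finite presentation is an open immersion. Everything else --- the ring/scheme dictionary for epimorphisms and flatness, the compact-object reformulation of finite presentation, and the faithfully-flat reformulation of the covering condition --- is essentially routine, the only mild care being the passage between the quantified finite subfamily and honest joint surjectivity via quasi-compactness of the affine base.
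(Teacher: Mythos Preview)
Your proposal is correct and follows essentially the same route as the paper: identify $\Aff_\cat$ with $\Ring\op$, invoke the classical characterization of open immersions as flat epimorphisms of finite presentation (the paper cites \cite{EGAIV4}, 17.9.1 for this, which is exactly the ``flat monomorphism locally of finite presentation'' theorem you flag as the main input), translate the isomorphism-reflection condition into faithful flatness and hence joint surjectivity (the paper cites \cite{atmc} 3.9 for the module-local-to-global step, which is equivalent to your argument via $B=\prod_j A_j$), and use quasi-compactness of affine schemes to handle the finite-subfamily quantifier. Your write-up is more explicit, but the decomposition and the key lemmas are the same.
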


\begin{proof}
In this case the category $\Aff_\cat$ is the category $\Ring\op$, which is equivalent to the category of affine schemes because of \cite{EGAI} I.7.4. A map of rings $A\ra B$ induces an open immersion if and only if it is a flat epimorphism of finite presentation because of \cite{EGAIV4}, 17.9.1. Also, using \cite{atmc} 3.9, a collection $\{A\ra B_i\}$ induces a covering of $\Spec A$ if and only if it reflects isomorphisms of modules. Because any affine scheme is quasi-compact, it is always possible to extract a finite sub-covering labeled by $J$, and this proves the claim.
 
\end{proof}

Note that, in particular, it is part of the definition the fact that affine schemes are quasi-compact (a finite sub-covering is indexed by $J$), while that is granted by the explicit definition of the Zariski topology in the case of rings.
Now that we introduced a topology on affine schemes, we can study Zariski sheaves over affine schemes. In the case of rings, the functor represented by any affine scheme was also a sheaf. In this more general setting, this fact is still true, and it needs indeed a more elaborated proof (\cite{toenvaquie}, 2.11).

We then use the word ``affine scheme'' to refer both to objects $X$ of $\Aff_\cat$ and also to functors $h_X$ represented by them. 
In order to define a scheme, we still have to define open coverings of sheaves, so to have a good definition of ``being locally affine'' also for a sheaf. 

\begin{df}\label{df:openimm2}
A map $f\colon \mcf\ra h_X$ of Zariski sheaves over $\Aff_\cat$ is an \emph{open immersion} if it defines $\mcf$ as a subsheaf of $h_X$ and if there exists a family of open immersions $\{X_i\ra X\}_{i\in I}$ such that $\mcf$ is isomorphic over $h_X$ to the image of the induced map $\coprod_{i\in I}{h_{X_i}}\ra h_X$. More generally, a map $f\colon \mcf\ra \mcg$ of Zariski sheaves over $\Aff_\cat$ is an \emph{open immersion} if for every affine scheme $h_X$ over $\mcg$, the induced morphism $\mcf\times_{\mcg} h_X\ra h_X$ is an open immersion. A collection $\{\mcf_i\ra \mcf\}_{i\in I}$ of open immersions is a \emph{Zariski covering} if the induced map $\coprod_{i\in I}{\mcf_i}\ra \mcf$ is an epimorphism.
\end{df}

One should check that all the definitions given agree on affine schemes. This is again something completely not trivial (\cite{toenvaquie}, 2.14).

We are now ready to give the definition of a scheme in this new setting.

\begin{df}
A \emph{scheme relative to $\cat$} (or a \emph{scheme à la To\"en-Vaquié relative to $\cat$}) is a Zariski sheaf over affine schemes in the sense of Definition \ref{df:affsch}, which has a Zariski covering constituted of open immersions of affine schemes. The \emph{category of schemes relative to $\cat$} is the full subcategory of $\Psh(\Aff_\cat)$ whose objects are schemes relative to $\cat$.
\end{df}

As a side note, we remark that in case $(\cat,\otimes)$ is the category of abelian groups with the tensor product $\otimes_\zz$, then the category of schemes relative to $\cat$ is equivalent to the category of schemes as defined in \cite{demazuregabriel}, I.1.3.11. This comes from Proposition \ref{prop:dgistoen} and the fact that a a family of open immersions $\{\mcf_i\ra\mcf\}$ induces an  an epimorphism of Zariski sheaves $\coprod\mcf_i\ra\mcf$ if and only if it induces a surjection $\coprod\mcf_i(\Spec K)\ra\mcf(\Spec K)$ for all fields $K$ (see \cite{toencorso}, Lemma 4.2.1).

As it is shown in \cite{toenvaquie}, 2.18, the category of schemes relative to $\cat$ inside the category of Zariski sheaves is stable under disjoint unions and fibered products. This easily implies that Zariski coverings define a Grothendieck pretopology on schemes relative to $\cat$. The site they form is again called the \emph{Zariski site}. 

Up to now, we presented the whole picture of generalized schemes à la To\"en-Vaquié. It is now time to focus on schemes over $\fu$ which another special case of the general theory.

\begin{df}
A \emph{$\fu$-scheme} or a \emph{scheme over $\fu$} is a scheme relative to the monoidal category $(\Set,\times)$. The category of $\fu$-schemes is denoted with $\Sch_{\fu}$.
\end{df}

In particular, since monoids in $(\Set,\times)$ are just ordinary commutative monoids, the category $\Aff_\cat$ is the category $\Mon\op$. We will henceforth refer to it with $\Aff$. Also, for a fixed monoid $M$, the category of $M$-modules is the category of $M$-sets, i.e. sets with an action of $M$. It is not an abelian category, since the initial object $\emptyset$ it is not the final object $\{*\}$. We also note that for a couple of $M$-modules $S$ and $T$, $S\otimes_M T$ is the set $S\times T$ modulo the equivalence relation generated by the relation $(m\cdot s,t)\sim(s,m\cdot t)$. In case $S$ and $T$ are $M$-algebras, by Proposition \ref{prop:factsontensor}, the module $S\otimes_M T$ inherits a $M$-algebra structure, and it is isomorphic to $S\sqcup_MT$ in the category $M\Alg$.

\section{Deitmar - To\"en-Vaquié equivalence}

We now want to prove the equivalence of categories between the two different notions of \mbox{$\fu$-schemes} that we introduced so far. A large part of this section is dedicated to commutative algebra of monoids, in which we try to set up an environment which is similar to the classical one of commutative rings. We denote with $\fu$ the trivial monoid $\{1\}$.

\begin{prop}
Let $M$ be a monoid. The forgetful functor from $M\Alg$ to $\Mon$ has a left adjoint which sends a monoid $N$ to $M\times N$ with the natural $M$-action. In particular, the forgetful functor from $M\Alg$ to $\Set$ has a left adjoint that sends a set $S$ to the monoid 
\[
M[S]:=\{m\cdot s_1^{d_1}s_2^{d_2}\ldots s_k^{d_k}:k\in\zz_{\geq0}, m\in M, s_i\in S, d_i\in\zz_{\geq0}\}
\]
with the obvious operation and $M$-action. We shall indicate the monoid $M[\{x_1,\ldots,x_n\}]$ with $M[x_1,\ldots,x_n]$.
\end{prop}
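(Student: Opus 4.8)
The plan is to establish the $\Mon$-level adjunction directly from the universal properties and then obtain the $\Set$-level statement by composing with the free-monoid functor. First I would unwind the definitions: recall that $M\Alg={}^{M/}\!\Mon$, so an object is a monoid $A$ equipped with a structure homomorphism $\iota\colon M\ra A$, and the forgetful functor $U\colon M\Alg\ra\Mon$ simply discards $\iota$. Endow $M\times N$ with the $M$-algebra structure coming from $m\mapsto(m,1)$ (equivalently, the natural $M$-action by multiplication on the first coordinate), and claim that $N\mapsto M\times N$ is left adjoint to $U$, i.e.\ there is a bijection, natural in $N$ and $A$,
\[
\Hom_{M\Alg}(M\times N,A)\cong\Hom_{\Mon}(N,U(A)).
\]
To construct it I would send a map of $M$-algebras $f$ to its restriction $n\mapsto f(1,n)$, and conversely send a homomorphism $g\colon N\ra A$ to the assignment $(m,n)\mapsto\iota(m)\cdot g(n)$. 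These are visibly mutually inverse once one checks that $(m,n)\mapsto\iota(m)\, g(n)$ is a monoid homomorphism respecting the structure map; the only nontrivial point is multiplicativity, which holds \emph{precisely because $A$ is commutative}. Equivalently, I could observe that in the category of commutative monoids finite coproducts agree with finite products, so that $M\times N$ with the above inclusion is the coproduct $M\sqcup N$; the asserted adjunction is then the standard fact that for any category $\cat$ with binary coproducts the coslice forgetful functor ${}^{M/}\!\cat\ra\cat$ has left adjoint $N\mapsto M\sqcup N$. Naturality in both variables is routine.

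For the ``in particular'' clause I would factor the forgetful functor $M\Alg\ra\Set$ as $M\Alg\ra\Mon\ra\Set$. The right-hand functor is the forgetful functor from monoids to sets, whose left adjoint is the free commutative monoid functor $S\mapsto\nn^{(S)}$, realized concretely as the monoid of monomials $s_1^{d_1}\cdots s_k^{d_k}$ in the elements of $S$, with multiplication given by adding exponents. A composite of left adjoints is left adjoint to the composite, so the left adjoint of $M\Alg\ra\Set$ is $S\mapsto M\times(\text{free monoid on }S)$. Finally I would identify this product with $M[S]$: an element is a pair consisting of $m\in M$ and a monomial, written multiplicatively as $m\cdot s_1^{d_1}\cdots s_k^{d_k}$, with the $M$-action by multiplication on the $M$-coordinate --- which is exactly the description in the statement. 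One may double-check directly that $M[S]$ has the correct universal property, since an $M$-algebra map out of $M[S]$ is forced to send $m\mapsto\iota(m)$ and is then free on the images of the generators $s\in S$.

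I expect no serious obstacle, as the proposition is essentially formal. The single point that must not be glossed over is the role of commutativity: it is what makes $M\times N$ (rather than some noncommutative free product) the universal $M$-algebra generated by $N$, and hence what makes the categorical product the correct value of the left adjoint. Everything else reduces to chasing the defining universal properties and composing two adjunctions.
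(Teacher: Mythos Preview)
Your proposal is correct, and your alternative argument --- that $M\times N$ is the coproduct $M\sqcup N$ in $\Mon$, so the coslice forgetful functor has it as left adjoint --- is exactly the paper's proof: the paper observes that $\Mon=\fu\Alg$ and $M\otimes_{\fu}N=M\times N$, then invokes Corollary~\ref{cor:adjalg}, which is precisely the general coslice/base-change adjunction you cite. Your primary argument (the explicit inverse bijections $f\mapsto f(1,\cdot)$ and $g\mapsto\iota(\cdot)g(\cdot)$) and your treatment of the ``in particular'' clause via composing with the free-monoid adjunction are more explicit than the paper, which leaves these as immediate consequences, but the underlying content is the same.
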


\begin{proof}
The category of monoids is the category of $\fu$-algebras, and for any couple of monoids $M$ and $N$, we have $M\otimes_{\fu}N=M\times N$. The result then follows from Corollary \ref{cor:adjalg}.
 
\end{proof}

\begin{exam}
Consider the monoid $(\zz_{\geq1},\cdot)$. It is isomorphic to $\fu[x_1,x_2,\ldots]$ through the map $x_i\mapsto p_i$, where the $p_i$'s are the positive primes. 
\end{exam}

\begin{df}
Let $M$ be a monoid and let $\varphi\colon M\ra N$ be a $M$-algebra. An equivalence relation $\sim$ on $N$ is \emph{monoidal and $M$-linear} if it is defined by a subset of $N\times N$ which is a sub-$M$-algebra with respect to the diagonal action of $M$ on $N\times N$.
Given a monoidal $M$-linear equivalence relation $\sim$ on $N$, it is possible to define a structure of $M$-algebra on $N/\!\!\sim$ mapping $m$ to $[\varphi(m)]$. A $M$-algebra $N$ is called \emph{finitely generated} if there exists an integer $n$ and a surjective map of \mbox{$M$-algebras} from $M[x_1,\ldots,x_n]$ to $N$. Equivalently, if it is isomorphic as $M$-algebra to $M[x_1,\ldots,x_n]/\!\!\sim$ for a suitable monoidal $M$-linear equivalence relation $\sim$.
\end{df}

\begin{prop}
Let $N$ be a $M$-algebra. Then $N$ is of finite presentation if and only if it is isomorphic as a $M$-algebra to $M[x_1,\ldots,x_n]/\!\!\sim$ where the relation $\sim$ is a finitely generated sub-\mbox{$M[x_1,\ldots,x_n]$-algebra} of the monoid $M[x_1,\ldots,x_n]\times M[x_1,\ldots,x_n]$ i.e. $N$ is the coequalizer in the category of $M[x_1,\ldots,x_n]$-algebras of a diagram
\[
M[x_1,\ldots,x_n][y_1,\ldots,y_m]\rightrightarrows M[x_1,\ldots,x_n]
\]
for some suitable $n,m\in\nn$.
\end{prop}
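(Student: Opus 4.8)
The plan is to identify the functorial finite-presentation condition of Definition~\ref{df:openimm1} with the existence of a presentation by finitely many generators and relations. First I observe that the two formulations in the statement carry the same datum: writing $R:=M[x_1,\ldots,x_n]$, to give two $R$-algebra maps $u,v\colon R[y_1,\ldots,y_m]\rightrightarrows R$ is the same as to give the $m$ pairs $(u(y_j),v(y_j))\in R\times R$, and the image of $(u,v)\colon R[y_1,\ldots,y_m]\to R\times R$ is exactly the sub-$R$-algebra of $R\times R$ generated by these pairs, hence a finitely generated one. The coequalizer of $u,v$ in $R\Alg$ is $R$ modulo the monoidal $M$-linear equivalence relation generated by that subalgebra. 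Thus it suffices to prove that $N$ is of finite presentation over $M$ if and only if $N$ is a coequalizer of two maps between free $M$-algebras on finite sets. I note also that such a coequalizer, taken in $R\Alg$, agrees with the coequalizer in $M\Alg$ of the underlying diagram $M[x_1,\ldots,x_n,y_1,\ldots,y_m]\rightrightarrows M[x_1,\ldots,x_n]$, since in either case it is computed as the corresponding coequalizer in $\Mon$ (coslice forgetful functors create connected colimits).

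For the implication from the coequalizer form to finite presentation, I would apply $\Hom_{M\Alg}(-,C)$ to the coequalizer $M[x_1,\ldots,x_n,y_1,\ldots,y_m]\rightrightarrows M[x_1,\ldots,x_n]\to N$, turning it into an equalizer. By the free-forgetful adjunction of the previous proposition, $\Hom_{M\Alg}(M[x_1,\ldots,x_n],C)$ is the $n$-fold product of the underlying set of $C$, and similarly for the other term, so $\Hom_{M\Alg}(N,-)$ is an equalizer of maps between finite powers of the underlying-set functor. Since that functor preserves direct limits and finite limits commute with filtered colimits in $\Set$, the functor $\Hom_{M\Alg}(N,-)$ preserves direct limits, which is exactly finite presentation.

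For the converse, suppose $N$ is of finite presentation. Writing $N$ as the filtered colimit of its finitely generated sub-$M$-algebras (a colimit computed on underlying sets, as the forgetful functor preserves direct limits) and factoring $\mathrm{id}_N$ through one of them shows that $N$ is itself finitely generated: the factoring inclusion is a split epimorphism, hence bijective. I then fix a surjection $\pi\colon R=M[x_1,\ldots,x_n]\to N$ and let $\sim$ be the monoidal $M$-linear equivalence relation defined by $a\sim b\iff\pi(a)=\pi(b)$. Now I present $\sim$ as the filtered union of the subrelations $\theta_\lambda\subseteq\sim$ that are generated, as congruences, by finitely many pairs; then $N=\varinjlim_\lambda R/\theta_\lambda$ is a filtered colimit, with structure maps the canonical surjections $c_\lambda\colon R/\theta_\lambda\to N$. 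Finite presentation lets me factor $\mathrm{id}_N$ as $c_{\lambda_0}\circ s$ for some $s\colon N\to R/\theta_{\lambda_0}$.

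The main obstacle is to upgrade this retraction into an honest isomorphism at a finite stage. For each generator, comparing $s(\pi(x_i))$ with the class of $x_i$ under $c_{\lambda_0}$ yields finitely many relations $x_i\sim w_i$ in $R$; since $\sim=\bigcup_\lambda\theta_\lambda$ is filtered, all of them already hold in some $\theta_\mu$ with $\mu\geq\lambda_0$. I would then check that the induced map $s_\mu\colon N\to R/\theta_\mu$ is a two-sided inverse of $c_\mu$: the composite $c_\mu\circ s_\mu$ is $\mathrm{id}_N$ by construction, while $s_\mu\circ c_\mu$ is an $M$-algebra endomorphism of $R/\theta_\mu$ fixing each generator $[x_i]$, hence the identity. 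Therefore $\theta_\mu=\sim$, so $\sim$ is generated by finitely many pairs $(a_1,c_1),\ldots,(a_m,c_m)$; taking the maps $R[y_1,\ldots,y_m]\rightrightarrows R$ with $y_j\mapsto a_j$ and $y_j\mapsto c_j$ exhibits $N$ in the desired coequalizer form. The delicate point throughout is exactly this passage to a common finite stage, which repairs the naive and false hope that the first factoring index $\lambda_0$ already works.
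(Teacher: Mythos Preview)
Your argument is correct and is precisely the adaptation of the classical proof in \cite{EGAIV3}, 8.14.2.2 that the paper invokes without spelling out: the paper's own proof consists of the single remark that one runs the EGA argument with congruences in place of ideals, and that is exactly what you have done. Your handling of the delicate step---passing from the first factoring index $\lambda_0$ to a larger $\mu$ at which $s_\mu\circ c_\mu$ fixes the generators---is the right one and matches the standard treatment.
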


\begin{proof}
The proof runs in the same way as in \cite{EGAIV3}, 8.14.2.2. The only difference is that instead of taking quotients over ideals, we now have to consider quotients over $M[x_1,\ldots,x_n]$-linear monoidal equivalence relations.
 
\end{proof}

Let $\{p_i,q_i\}_{i\in I}$ be elements of $M[S]$. From now on, we indicate with $(p_i=q_i)_{i\in I}$ the monoidal $M[S]$-linear equivalence relation on $M[S]$ generated by the couples $(p_i,q_i)$. 

\begin{df}
Let $M$ be a monoid. We call it a \emph{monoid with zero} if there exists an element $0$ such that $\{0\}$ is an ideal. Arrows between monoids with zero are arrows of monoids that send $0$ to $0$. We call the category they form with $\Mon_0$. The forgetful functor $\Mon_0\ra\Mon$ has a left adjoint that sends $M$ to $M_0:=M\sqcup\{0\}$, with the obvious operation.
\end{df}

\begin{exam}
The monoid $(\zz,\cdot)$ is isomorphic to the monoid \[\left(\fu[u,x_1,x_2,\ldots]\biggr/\left(u^2=1\right)\right)_0\] through the map $u\mapsto -1$, $x_1\mapsto p_1$, where the $p_i$'s are the positive primes. 
\end{exam}

\begin{cor}\label{cor:locmonfp}
A localization of a monoid over a finite set of elements is of finite presentation.
\end{cor}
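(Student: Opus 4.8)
The plan is to reduce the statement to the characterization of finitely presented $M$-algebras established in the preceding proposition: an $M$-algebra is of finite presentation exactly when it is a quotient $M[x_1,\ldots,x_n]/\!\!\sim$ by a finitely generated monoidal $M[x_1,\ldots,x_n]$-linear equivalence relation. It therefore suffices to exhibit the localization at a finite set of elements as such a quotient, and the whole argument becomes a comparison of universal properties.

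First I would treat the localization at a single element $a\in M$ and claim that $M_a\cong M[x]/(ax=1)$ as $M$-algebras. A map of $M$-algebras out of the free algebra $M[x]$ is determined by the image of $x$, and factoring through $M[x]/(ax=1)$ into a target $N$ amounts to requiring that the chosen image $n$ of $x$ satisfy $a\cdot n=1$, where $a$ denotes its image under the structure map $M\ra N$; that is, that the image of $a$ be invertible with inverse $n$. Conversely, given any $f\colon M\ra N$ sending $a$ to a unit, setting $x\mapsto f(a)^{-1}$ produces an $M$-algebra map $M[x]\ra N$ respecting the relation $ax=1$, and since inverses in a commutative monoid are unique this factorization is the only one possible. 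This is precisely the universal property defining $M_a$, so the two $M$-algebras are canonically isomorphic.

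Next I would pass to a finite set $S=\{a_1,\ldots,a_k\}$. Writing $S$ also for the submonoid of $M$ it generates — which, as remarked after the definition of localization, produces the same localization — I claim
\[
S^{-1}M\cong M[x_1,\ldots,x_k]/(a_1x_1=1,\ldots,a_kx_k=1)
\]
as $M$-algebras. The verification is identical to the single-element case: maps out of the right-hand side into an $M$-algebra $N$ correspond to maps $M\ra N$ together with a chosen inverse for each image of $a_i$, hence to maps $M\ra N$ under which every $a_i$, and therefore every element of the submonoid they generate, becomes invertible — which is exactly the universal property of $S^{-1}M$.

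Finally, the equivalence relation $(a_1x_1=1,\ldots,a_kx_k=1)$ is by construction generated by the $k$ couples $(a_ix_i,1)$, hence is a finitely generated sub-$M[x_1,\ldots,x_k]$-algebra of the product $M[x_1,\ldots,x_k]\times M[x_1,\ldots,x_k]$. Applying the previous proposition then shows that $S^{-1}M$ is an $M$-algebra of finite presentation, as claimed. The only point that really requires care is the comparison of universal properties, and in particular the two observations that inverting a finite set of elements coincides with inverting the submonoid they generate and that uniqueness of inverses in a commutative monoid makes the factorizations above unique; everything else is a direct appeal to the structural results already in place.
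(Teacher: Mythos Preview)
Your proof is correct and follows the same approach as the paper: exhibit the localization explicitly as a quotient of a free $M$-algebra by finitely many relations of the form $a_ix_i=1$, then invoke the preceding characterization of finite presentation. The paper is terser, reducing immediately to the single-element case (since $S^{-1}M\cong M_{a_1\cdots a_k}$) and declaring the isomorphism $M_a\cong M[x]/(ax=1)$ ``straightforward'', whereas you spell out the comparison of universal properties and treat the multi-element case directly; but this is a difference of presentation, not of method.
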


\begin{proof}
We can reduce ourselves to consider the case in which we localize over a single element $a$. It is straightforward that $M_a=M[x]/(ax=1)$. We can then apply the previous proposition and conclude the claim.
 
\end{proof}

\begin{prop}\label{prop:locmonflat}
Localizations of monoids are flat.
\end{prop}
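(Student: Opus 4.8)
The plan is to make the base-change functor explicit and then check left exactness directly, using that it is automatically right exact. Fix a monoid $M$ and a submonoid $S\subseteq M$ (we may restrict to submonoids, as noted after the definition of localization), and write $B=S^{-1}M$. By Proposition \ref{prop:factsontensor}(2) the functor $-\otimes_M B\colon M\Mod\ra B\Mod$ is a left adjoint, hence commutes with all colimits; in particular it is right exact, and by the definition of flatness it remains only to show that it commutes with finite limits. Since finite limits are generated by the terminal object, binary products and equalizers, it suffices to verify preservation of these three.

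First I would identify $X\otimes_M B$ with the \emph{localization of the $M$-set $X$}, namely the set $S^{-1}X$ of formal fractions $x/s$ with $x\in X$, $s\in S$, where $x/s\sim x'/s'$ iff $us'x=usx'$ for some $u\in S$, equipped with the evident $B$-action. Using the explicit description of the relative tensor product recalled above, under which $X\otimes_M B$ is $X\times B$ modulo the relation $(m\cdot x,\,\xi)\sim(x,\,m\cdot\xi)$, the mutually inverse assignments $[x,\,a/s]\mapsto(a\cdot x)/s$ and $x/s\mapsto[x,\,1/s]$ furnish the required isomorphism; both are well defined precisely because $S$ is closed under multiplication, so that witnesses can always be cleared. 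Note also that, since the forgetful functors $M\Mod\ra\Set$ and $B\Mod\ra\Set$ both create limits, all the finite limits in question may be computed on underlying sets with the induced action.

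With this identification in hand the three checks are short. The terminal object of $M\Mod$ is the one-point set $\{*\}$ with trivial action, and $S^{-1}\{*\}=\{*\}$ since the action on a single point is trivial; hence the terminal object is preserved. For binary products, the map $S^{-1}(X\times Y)\ra S^{-1}X\times S^{-1}Y$ given by $(x,y)/s\mapsto(x/s,\,y/s)$ is a bijection: surjectivity holds because $(x/s,\,y/t)$ is the image of $(t\cdot x,\,s\cdot y)/st$, and injectivity follows by combining the two witnesses $u,v\in S$ into $uv\in S$. For equalizers, write $E=\{x\in X:f(x)=g(x)\}$ for the equalizer of $f,g\colon X\ra Y$; it is an $M$-submodule because $f,g$ are $M$-equivariant, and the canonical map from $S^{-1}E$ to the equalizer $\{x/s:f(x)/s=g(x)/s\}$ of $S^{-1}f$ and $S^{-1}g$ is an isomorphism. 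Indeed $f(x)/s=g(x)/s$ means $v f(x)=v g(x)$ for some $v\in S$, whence $f(vx)=g(vx)$, i.e. $vx\in E$, so that $x/s=(vx)/(vs)$ lies in the image.

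The main obstacle, and the place where the content sits, is the equalizer step: it is the monoidal analogue of the exactness of localization for rings, and its proof hinges on clearing a denominator to push an arbitrary fraction into the submodule $E$. A feature specific to this non-abelian setting (the category $M\Mod$ is not additive) is that, unlike in the classical case, preservation of the terminal object and of binary products does not come for free and must be checked separately; fortunately each reduces to the single fact that $S$ is multiplicatively closed, so that denominators can always be combined. As an alternative one may realize $S^{-1}X$ as a filtered colimit of copies of $X$ indexed by $S$ and invoke the commutation of filtered colimits with finite limits in $\Set$, but the care needed when $S$ is not cancellative makes the direct verification above at least as clean.
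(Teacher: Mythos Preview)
Your proof is correct and follows essentially the same approach as the paper: both identify $X\otimes_M S^{-1}M$ with the fraction module $S^{-1}X$ and then verify directly that this construction preserves finite products and equalizers, using the multiplicativity of $S$ to clear and combine denominators. Your treatment is in fact slightly more explicit than the paper's (you spell out the terminal object and the bijectivity for binary products, which the paper leaves as ``easy to see''), but the structure and key computations are the same.
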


\begin{proof}
Let $T$ be a $M$-module. 
The $S^{-1}M$-module \mbox{$S^{-1}T\colonequals T\otimes_MS^{-1}M$} has the following alternative description. Its underlying set is
\[
S^{-1}T\colonequals\left.\left\{\frac{t}{s}\colon t\in T, s\in S \right\}\right/\!\!\sim
\]
where $\sim$ is the equivalence relation that identifies $\frac{t}{s}$ and $\frac{t'}{s'}$ if there exists an element $s''\in S$ such that $s''s'\cdot t=s''s\cdot t'$. The action of $S^{-1}M$ is defined by $\frac{m}{s}\cdot\frac{t}{s'}:=\frac{m\cdot t}{ss'}$.

Let now $S$ be a multiplicatively closed subset of $M$. We have to prove that the functor $\otimes_MS^{-1}M$ commutes with equalizers and finite products in the category of $M$-modules. In this category, both these limits are built over the limits in the category of sets, with the obvious $M$-action induced. 
Let now $T$ and $U$ be $M$-modules. It is easy to see that the map
\[
\begin{aligned}
S^{-1}(T\times U)&\ra S^{-1}T\times S^{-1}U\\
\frac{(t,u)}{s}&\mapsto \left(\frac{t}{s},\frac{u}{s}\right)\\
\end{aligned}
\]
defines an isomorphism of $M$-modules from $S^{-1}(T\times U)$ to $S^{-1}T\times S^{-1}U$, as wanted.

Also, for two arrows of $M$-modules $\varphi,\psi\colon T\rightrightarrows U$ whose equalizer is $E$, there is a natural map from $S^{-1}E$ to the equalizer $E'$ of the induced couple of arrows $S^{-1}T\rightrightarrows S^{-1}U$.  This maps sends the element $\frac{x}{s}$ in $S^{-1}E$ to $\frac{x}{s}$, seen as an element of $S^{-1}T$. This map is clearly injective. Suppose now that $\frac{t}{s}$ is in $E'$. This means that $\frac{\varphi(t)}{s}=\frac{\psi(t)}{s}$, hence that there exists an element $s'\in S$ such that 
\[
\varphi(s's\cdot t)=s's\cdot \varphi(t)=s's\cdot\psi(t)=\psi(s's\cdot t).
\]
We then conclude that $\frac{t}{s}=\frac{s's\cdot t}{s's^2}$ and $s's\cdot t\in E$. This proves the surjectivity, hence the claim.
 
\end{proof}

The following two results concern flat epimorphisms of monoids. In particular, we would like to conclude that local flat epimorphisms are isomorphisms. Stenstr\"om in \cite{stenstrom} refers to the work of Roos and he states that flat epimorphisms of (non necessarily commutative) monoids can be characterized as localizations over Gabriel topologies, using the tools of torsion theory developed in \cite{gabriel} by Gabriel. Indeed, any epimorphism of monoids $M\ra N$ induces a full embedding of categories $N\Mod\ra M\Mod$ via the forgetful functor. Due to the flatness property, this forgetful functor has also an exact left adjoint, hence it defines a localization of $M\Mod$. However, the proof of the fact that such reflective subcategories are all localizations with respect to some Gabriel topologies of monoids is not present in \cite{stenstrom}, and it is not a direct corollary of the general results of Gabriel, who considered abelian categories. Therefore, since in our case $M\Mod$ is not abelian, we prefer to follow a more explicit approach, which is in turn valid just for our specific setting. 

Analogous results on the comparison of the two topologies on $\Mon\op$ have been proven independently by Florian Marty, who used a more abstract and general approach, based on Gabriel filters. All the details can be found in his article \cite{martyopen}.

\begin{lem}\label{lemma:surjoninv}
A local epimorphism of monoids is surjective on invertible elements. 
\end{lem}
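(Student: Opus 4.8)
The plan is to exploit the pushout characterisation of epimorphisms together with the concrete description of the tensor product recorded above. Fix an invertible element $u\in N^\times$; the goal is to show $u\in f(M)$, which by locality is the same as showing $u\in f(M^\times)$. Since $f$ is an epimorphism, the two coprojections $N\rightrightarrows N\sqcup_M N$ coincide, and by Proposition \ref{prop:factsontensor}(\ref{push}) this pushout is $N\otimes_M N$, realised as $(N\times N)/\!\!\sim$ with $\sim$ generated by $(f(m)a,b)\sim(a,f(m)b)$. Equality of the two coprojections evaluated at $u$ then reads $(u,1)\sim(1,u)$, so there is a finite zig-zag of generating moves joining $(u,1)$ to $(1,u)$ inside $N\times N$.

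First I would observe that every generating move preserves the product of the two coordinates, since $f(m)a\cdot b=a\cdot f(m)b$. Hence every pair $(x,y)$ occurring along the zig-zag satisfies $xy=u$; as $u$ is invertible, both $x$ and $y$ are then invertible as well. In other words, the whole zig-zag is confined to the fibre $\{(x,y)\in N^\times\times N^\times : xy=u\}$.

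The key step is to bring in locality. In a single move one transfers a factor $f(m)$ from one coordinate to the other, say $x=f(m)x'$ with $x,x'\in N^\times$; then $f(m)=x(x')^{-1}\in N^\times$, so the hypothesis $f^{-1}(N^\times)=M^\times$ forces $m\in M^\times$ and $f(m)\in f(M^\times)$. Writing $H\colonequals f(M^\times)$, a subgroup of $N^\times$, this shows that each move acts on a pair as $g\cdot(x,y)=(gx,g^{-1}y)$ for a suitable $g\in H$ (one of $f(m)$ or $f(m)^{-1}$, both lying in the group $H$). Composing the moves of the zig-zag, $(1,u)$ is obtained from $(u,1)$ by the action of a single element $G\in H$ --- here the fact that $H$ is a group is exactly what lets the whole chain collapse to one element. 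Comparing first coordinates gives $Gu=1$, whence $u=G^{-1}\in H=f(M^\times)\subset f(M)$, as desired.

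I expect the main obstacle to be the bookkeeping of the previous paragraph: namely, justifying rigorously that a zig-zag in the ambient relation on $N\times N$ joining two points of the fibre over $u$ can be analysed move-by-move as an action of $H$, and in particular that no genuinely non-invertible factor $f(m)$ is ever transferred. This is precisely the point at which locality is indispensable --- without it (as for the group completion $\nn\hookrightarrow\zz$, which is an epimorphism but not surjective on invertibles) the statement is false --- and it is the only place where the hypotheses are used in an essential, non-formal way.
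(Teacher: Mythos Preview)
Your argument is correct and genuinely different from the paper's. The paper does not unpack the tensor product at all; instead it manufactures a test target. It collapses the maximal ideal of $N$ to a single point, obtaining the monoid $(N^\times)_0$, then forms the group quotient $T=N^\times/\varphi(M^\times)$ and considers two monoid maps $(N^\times)_0\rightrightarrows T_0$, the projection and the constant $1$. Locality guarantees that the two composites $M\to N\to(N^\times)_0\rightrightarrows T_0$ coincide (non-units land at $0$, units land in $\varphi(M^\times)$ which is trivial in $T$), and then the epimorphism hypothesis forces the two maps out of $N$ to coincide, so $T$ is trivial and $\varphi(M^\times)=N^\times$. Your route trades this clean ``construct the right quotient and separate'' trick for a direct zig-zag analysis inside $N\times N$: more bookkeeping, but it makes transparently visible \emph{where} locality enters (to keep each transferred factor inside $\varphi(M^\times)$), and it requires no inspiration beyond the explicit description of $\otimes_M$ already recorded in the paper. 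The paper's proof is shorter and scales better (one quotient settles all units at once), while yours is more self-contained and pedagogically illuminating; your closing remark about $\nn\hookrightarrow\zz$ is a nice sanity check that neither proof can dispense with locality.
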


\begin{proof}
Let $\varphi\colon M\ra N$ be a local epimorphism of monoids. Consider the set $N/\!\!\sim_{\mfm}$, where $\sim_\mfm$ identifies the elements of the maximal ideal $\mfm:=N\setminus N^\times$. It has a natural monoid structure induced by the one in $N$, and it is isomorphic to the monoid with zero $(N^\times)_0$. We also consider the subgroup $\varphi(M^\times)$ in $N^\times$, and the quotient taken in the category of groups $T:=N^\times/\varphi(M^\times)$. We can now consider two maps $(N^\times)_0\rightrightarrows T_0$: the first one is induced by the projection, the second is induced by the constant map $N^\times\mapsto1_T$. Since $\varphi$ is local, the image of an element in $M$ via the two composite maps $N\ra (N^\times)_0\rightrightarrows T_0$ is the same. Hence, because $\varphi$ is an epimorphism, we conclude that $\varphi(M^\times)=N^\times$. 
 
\end{proof}

The statement of the following proposition is a generalization of a standard fact on the category of rings (see \cite{lazard}, IV.1.2). 

\begin{prop}\label{prop:flatepifpmon}
Let $\varphi\colon M\ra N$ be a map of monoids.
\begin{enumerate}
	\item If $\varphi$ is local and flat, then it is injective.
	\item If $\varphi$ is a local flat epimorphism, then it is an isomorphism.
\end{enumerate}
\end{prop}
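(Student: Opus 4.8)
The plan is to use flatness to reduce each assertion to a statement about a single sub-object realized as a finite limit, and then to use locality as the device that promotes ``the image of an element is a unit in $N$'' to ``the element itself is a unit in $M$''. The technical engine will be the following observation, valid whenever $\varphi$ is local: if $g\colon P\ra M$ is any map of $M\Mod$ such that the induced map $g\otimes_M N\colon P\otimes_M N\ra M\otimes_M N\cong N$ is surjective, then there is a $p\in P$ with $g(p)\in M^\times$. Indeed, every element of $P\otimes_M N$ is a class $[p\otimes\nu]$ and $g\otimes_M N$ carries it to $\varphi(g(p))\,\nu$, so surjectivity produces $p,\nu$ with $\varphi(g(p))\,\nu=1$; hence $\varphi(g(p))\in N^\times$, and locality (that is, $\varphi^{-1}(N^\times)=M^\times$) forces $g(p)\in M^\times$.

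For part (1), assume $\varphi(a)=\varphi(b)$. I would take the two $M$-linear maps $M\rightrightarrows M$ determined by $1\mapsto a$ and $1\mapsto b$; their equalizer is the submodule $E=\{x\in M\colon ax=bx\}$, and $E=M$ is equivalent to $a=b$. Since $\varphi$ is flat, $\otimes_M N$ preserves this equalizer, so $E\otimes_M N$ is the equalizer of multiplication by $\varphi(a)$ and by $\varphi(b)$ on $N$; as $\varphi(a)=\varphi(b)$, that equalizer is all of $N$, whence the inclusion $E\hookrightarrow M$ becomes an isomorphism, in particular a surjection, after $\otimes_M N$. Applying the engine to $g\colon E\hookrightarrow M$ yields a unit $x\in E$; from $ax=bx$ and the invertibility of $x$ we cancel to obtain $a=b$.

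For part (2), I would first invoke part (1) to see that $\varphi$ is injective, so it remains to prove surjectivity, a bijective morphism of monoids being an isomorphism. The epimorphism hypothesis enters through the standard fact that $\varphi$ is an epimorphism if and only if the multiplication $\mu\colon N\otimes_M N\ra N$ is an isomorphism (equivalently, the two coprojections $N\rightrightarrows N\otimes_M N$ coincide), which I will use to identify a class in $N\otimes_M N$ with its product. Fix $n\in N$ and form the fibre product $J=\{(x,y)\in M\times M\colon \varphi(x)\,n=\varphi(y)\}$, i.e.\ the pullback of $M\xrightarrow{x\mapsto\varphi(x)n}N\xleftarrow{\varphi}M$ in $M\Mod$. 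Flatness makes $\otimes_M N$ preserve this pullback; computing the two tensored legs as $\rho\mapsto[n\otimes\rho]$ and $\sigma\mapsto[1\otimes\sigma]$ and using $\mu$ to rewrite $[n\otimes\rho]=[1\otimes\sigma]$ as $n\rho=\sigma$, I get $J\otimes_M N\cong\{(\rho,n\rho)\colon\rho\in N\}$, so that the projection $g\colon J\ra M$, $(x,y)\mapsto x$, becomes surjective after $\otimes_M N$. The engine then provides $(x,y)\in J$ with $x\in M^\times$, and $n=\varphi(x)^{-1}\varphi(y)=\varphi(x^{-1}y)\in\varphi(M)$, giving surjectivity.

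The main obstacle I expect is the bookkeeping in part (2): correctly computing $N\otimes_M N$ together with the two tensored legs of the pullback, choosing the projection (the copy of $M$ that is multiplied by $n$) whose tensor is genuinely surjective rather than merely the image $nN$, and justifying the replacement of $[n\otimes\rho]$ by $[1\otimes n\rho]$ — precisely the point at which injectivity of $\mu$, hence the epimorphism hypothesis, is indispensable. The remaining ingredients are routine: that equalizers and pullbacks in $M\Mod$ are computed on underlying sets with the induced action, so that the flatness hypothesis applies as stated, and that every element of these tensor products is a single class $[p\otimes\nu]$, which is what makes the engine's image computation legitimate.
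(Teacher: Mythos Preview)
Your proof is correct. Part~(1) is essentially identical to the paper's argument: the same equalizer of the two multiplication maps, the same use of flatness to identify $E\otimes_M N$ with $N$, and the same locality step to produce a unit in $E$.

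Part~(2) is where you genuinely diverge. The paper proceeds indirectly: it forms the quotient $N/\!\!\sim_M$ collapsing $M$ to a point, uses flatness (including preservation of the terminal object) to show $(N/\!\!\sim_M)\otimes_M N=\{*\}$, then takes the kernel pair $K\subset N\times N$ of the projection and argues that $K\otimes_M N\cong N\times N$; reaching $(1,\bar n)$ yields an invertible element of $N$, which is pushed back into $M$ via the separate Lemma~\ref{lemma:surjoninv} on surjectivity on units. Your route is more economical: you build the pullback $J\subset M\times M$ directly over the fixed $n$, so that locality alone (your ``engine'') already places the resulting unit in $M$, and you never need the quotient construction, the terminal-object step, or Lemma~\ref{lemma:surjoninv}. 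The trade-off is that you lean more explicitly on the identification $\mu\colon N\otimes_M N\xrightarrow{\sim} N$ to compute the tensored pullback, whereas the paper only needs that the two coprojections $N\rightrightarrows N\otimes_M N$ coincide. Both are valid; yours is shorter and uses a single mechanism uniformly across both parts, while the paper's version makes the role of the ``image'' $M\subset N$ more geometrically visible through $N/\!\!\sim_M$. Your caution about picking the correct projection (so that the tensored map surjects onto $N$ rather than onto $nN$) is exactly the point that needs care, and you handle it correctly.
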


\begin{proof}
We initially prove the first claim. Suppose that $\varphi(a)=\varphi(b)=t$. Consider the two maps of $M$-modules $M\ra M$, $1\mapsto a$ and $1\mapsto b$, and let $E$ be their equalizer. By using the isomorphisms of $M$-modules $m\otimes n\mapsto \varphi(m)n$ from $M\otimes N$ to $N$, we conclude that the two maps tensored with $N$ are both equal to the map $N\ra N$, $n\mapsto tn$. In particular, the equalizer of the two is the whole of $N$. By the flatness property, we then deduce that the map $E\otimes N\ra N$, $x\otimes n\mapsto \varphi(x)n$ is an isomorphism. In particular, there exists an element $x\in E$ and an element $n\in N$ such that $\varphi(x)n=1$. Because the map is local, we conclude that $x$ is invertible. Since $ax=bx$, this implies that $a=b$.

Now we turn to the second claim. Because we already know that $\varphi$ is injective, we consider $M$ as a submonoid of $N$, and consider $\varphi$ as the inclusion. We recall that a map is an epimorphism if and only if its cokernel pair is constituted by identities. Because $N\otimes_MN$ is the cokernel pair of $\varphi$ in the category of monoids (Proposition \ref{prop:factsontensor}), we conclude that the two maps $N\ra N\otimes_M N$ defined as \mbox{$n\mapsto 1\otimes n$} and \mbox{$n\mapsto n\otimes 1$} are isomorphisms. Now consider the $M$-module $N/\!\!\sim_M$, defined as the quotient of $N$ with respect to the equivalence relation which identifies the elements of $M$. It has a well-defined $M$-module structure induced by the one of $N$, and a natural projection map $\pi\colon N\ra N/\!\!\sim_M$. This projection has the following universal property: any map of \mbox{$M$-modules} $N\ra T$ such that the image of $M$ is constant, splits uniquely through $\pi$. In other words, $\pi$ is the pushout of the diagram below.
$$\xymatrix{
M\ar[d]\ar[r]^{\varphi}&N\\
\{*\}&
}$$
Because of the flatness property, $\otimes_MN$ commutes with small products, hence it preserves the terminal object $\{*\}$ (the empty product). Also, because it commutes with colimits and $\varphi\otimes_MN=id_N$, we conclude that $(N/\!\!\sim_M)\otimes_MN$ is the pushout of the diagram
$$\xymatrix{
N\ar[d]\ar[r]^{=}&N\\
\{*\}&
}$$
hence it is the trivial module $\{*\}$.

We now inspect the kernel pair $K$ of the projection \mbox{$\pi\colon N\ra N/\!\!\sim_M$}. It is constituted by the couples $(x,y)$ in $N\times N$ such that $\pi(x)=\pi(y)$. Since $(N/\!\!\sim_M)\otimes_M N$ is the terminal object, the kernel pair of the tensored map is the product of two copies of $N\otimes_MN=N$. Because of the flatness property, we then conclude that the map $K\otimes_MN\ra N\times N$, $(x,y)\otimes n\mapsto(xn,yn)$ is an isomorphism. Fix now an element $\bar{n}$ of $N$. In particular, the couple $(1,\bar{n})$ has to be reached by the previous map, hence there is a couple $(x,y)\in K$ and an element $n\in N$ such that $xn=1$ and $yn=\bar{n}$. We then conclude that $n$ and $x$ are invertible, hence they are elements of $M$ by Lemma \ref{lemma:surjoninv}. Because the couple $(x,y)$ lies in $K$ and $x$ is in $M$, we conclude that also $y$ is in $M$. Therefore, $\bar{n}$ is an element of $M$. This holds for any $\bar{n}$, hence $M=N$. We then showed that $\varphi$ is also surjective. Because any bijective map of monoids is an isomorphism, the claim is proven. 
 
\end{proof}

\begin{thm}\label{thm:3opens}
Let $\varphi\colon M\ra N$ be a morphism of monoids. The following are equivalent.
\begin{enumerate}
	\item \label{m1}The map $\varphi$ is a flat epimorphism, of finite presentation.
	\item \label{m2}The map $\varphi$ is isomorphic as a $M$-algebra to a localization over an element of $M$.
	\item \label{m3}The map $\varphi$ defines an open immersion of affine geometrical \mbox{$\fu$-schemes}.
\end{enumerate}
\end{thm}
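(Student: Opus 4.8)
The plan is to show that each of (1) and (3) is equivalent to the intermediate condition (2), via the four implications $(2)\Rightarrow(1)$, $(2)\Rightarrow(3)$, $(1)\Rightarrow(2)$ and $(3)\Rightarrow(2)$. The two implications issuing from (2) are immediate from the results already established. If $\varphi$ is isomorphic to a localization $M\ra M_a$, then it is flat by Proposition \ref{prop:locmonflat} and of finite presentation by Corollary \ref{cor:locmonfp}; it is moreover an epimorphism, since any two maps out of $M_a$ that agree on the image of $M$ coincide by the uniqueness clause in the universal property of the localization. This gives $(2)\Rightarrow(1)$. For $(2)\Rightarrow(3)$, Proposition \ref{prop:specadjmon} identifies $\Spec M_a$ with the open subscheme $(D(a),\mco_{\Spec M}|_{D(a)})$ of $\Spec M$ (the value $\mco_{\Spec M}(D(a))=M_a$ and the stalks agree), so $\varphi$ induces the open inclusion $D(a)\hookrightarrow\Spec M$, an open immersion in the sense of Definition \ref{df:d-zaraff}.

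The algebraic core is $(1)\Rightarrow(2)$. Given a flat epimorphism $\varphi$, I would set $\mfp:=\varphi^{-1}(N\setminus N^\times)$, a prime ideal, and observe that $\varphi(M\setminus\mfp)\subset N^\times$, so that $\varphi$ factors as $M\ra M_\mfp\xrightarrow{\bar\varphi}N$. The map $\bar\varphi$ is local by construction of $\mfp$, an epimorphism because $\varphi$ is and $M\ra M_\mfp$ is, and flat by combining the flatness of $\varphi$ and of $M\ra M_\mfp$ with the fact that, $M\ra M_\mfp$ being an epimorphism, tensoring an $M_\mfp$-module with $N$ over $M$ or over $M_\mfp$ gives the same result. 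Proposition \ref{prop:flatepifpmon}(2) then forces $\bar\varphi$ to be an isomorphism, so that every flat epimorphism is a localization at a prime, $N\cong M_\mfp$. To descend from a prime to a single element I would now use finite presentation: writing $M_\mfp=\varinjlim_{a\notin\mfp}M_a$ as the directed colimit of the localizations at single elements, the defining bijection of finite presentation applied to $\mathrm{id}_{M_\mfp}$ produces a factorization $M_\mfp\xrightarrow{g}M_a\xrightarrow{h}M_\mfp$ through a single $M_a$ with $hg=\mathrm{id}$, where $h$ is the canonical localization map. Since $h$ is an epimorphism and $h(gh)=h$, we get $gh=\mathrm{id}$ as well, so $h$ is an isomorphism and $N\cong M_\mfp\cong M_a$.

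Finally $(3)\Rightarrow(2)$ is a point-set analysis of open subschemes of $\Spec M$. An open immersion exhibits $\Spec N$ as $(U,\mco_{\Spec M}|_U)$ for an open $U\subset\Spec M$. I would first record that $\Spec M$ is irreducible with generic point $\emptyset$, that its closed sets $V(I)$ are stable under specialization (hence open sets are stable under generization), and that the unique closed point of an affine scheme lies in the closure of every point. Transporting the closed point $N\setminus N^\times$ of $\Spec N$ to a prime $\mfp_0\in U$, these facts give $\mfq\subseteq\mfp_0$ for every $\mfq\in U$ and, by stability under generization, the equality $U=\{\mfq:\mfq\subseteq\mfp_0\}$. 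Since $U$ is open and contains $\mfp_0$, some basic open $D(a)$ satisfies $\mfp_0\in D(a)\subseteq U$; then $a\notin\mfp_0$ yields $D(a)=\{\mfq:a\notin\mfq\}\supseteq\{\mfq:\mfq\subseteq\mfp_0\}=U$, whence $U=D(a)$. Taking global sections and using Proposition \ref{prop:specadjmon} gives $N\cong\Gamma(U,\mco_{\Spec M}|_U)=\mco_{\Spec M}(D(a))=M_a$, with $\varphi$ corresponding to the restriction map $M\ra M_a$, which is (2).

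The main obstacle is the descent from a prime to a single element in $(1)\Rightarrow(2)$. The decisive realization is that flatness together with the epimorphism property already forces $\varphi$ to be the localization at $\mfp=\varphi^{-1}(N\setminus N^\times)$, and that finite presentation is precisely what is needed to exhibit $M_\mfp$ as a retract of some $M_a$ through the directed colimit $M_\mfp=\varinjlim M_a$, after which epimorphism-cancellation promotes the retraction to an isomorphism. A secondary point requiring care is the flatness of $\bar\varphi$, where one uses that an epimorphism of monoids makes tensoring over $M$ and over $M_\mfp$ agree on $M_\mfp$-modules; and in $(3)\Rightarrow(2)$ one must use that it is the combination of openness with the unique-closed-point property that rules out pathological affine loci, such as the single generic point of a polynomial monoid in infinitely many variables.
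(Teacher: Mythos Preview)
Your proof is correct and, for the implications $(2)\Rightarrow(1)$, $(2)\Rightarrow(3)$ and $(1)\Rightarrow(2)$, it follows essentially the same route as the paper. In $(1)\Rightarrow(2)$ the paper also factors through $M_\mfp$, checks that the induced map is a local flat epimorphism, invokes Proposition~\ref{prop:flatepifpmon}, and then uses finite presentation together with $M_\mfp=\varinjlim_{a\notin\mfp}M_a$; your final step (epimorphism cancellation from $h(gh)=h$) is just a cleaner rephrasing of the paper's terse ``all maps involved are $M$-algebra maps''.

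The one genuine difference is in $(3)\Rightarrow(2)$. The paper argues by coverings: cover the open subscheme $\Spec N\subset\Spec M$ by basic opens $D(a_i)$, then cover each $D(a_i)$ by basic opens $D(b_{ij})$ of $\Spec N$; since every covering of an affine monoid spectrum must contain the whole space, some $D(b_{ij})$ equals $\Spec N$, forcing $\Spec N=D(a_i)$. You instead give a direct point-set argument: locate the image $\mfp_0$ of the maximal ideal of $N$, use that the unique closed point of an affine spectrum lies in the closure of every point together with stability of opens under generization to identify $U=\{\mfq:\mfq\subseteq\mfp_0\}$, and then observe that any basic $D(a)$ containing $\mfp_0$ automatically contains all such $\mfq$. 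Both arguments are short; the paper's is slicker because it reuses a property already recorded in the text, while yours is more self-contained and makes the order structure of $\Spec M$ explicit (and, as you note in your last paragraph, transparently explains why non-open loci such as the generic point of an infinite polynomial monoid are excluded).
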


\begin{proof}
The fact that (\ref{m2}) implies (\ref{m3}) is obvious. It is also easy to show that (\ref{m3}) implies (\ref{m2}). Indeed, suppose that $\Spec_{\fu}\!\! N$ is an open geometrical $\fu$-subscheme of $\Spec_{\fu}\!\! M$. Cover it with basis open sets $\{\Spec_{\fu}\!\! M_{a_i}\}$, and cover each of these with basis open sets $\{\Spec_{\fu}\!\! N_{b_{ij}}\}$. Because all coverings of affine schemes are trivial, we conclude that $\Spec_{\fu}\!\! N_{b_{ij}}$ equals $\Spec_{\fu}\!\! N$ for some couple $(i,j)$, and in particular $\Spec_{\fu}\!\! N$ equals $\Spec_{\fu}\!\! M_{a_i}$. The fact that (\ref{m2}) implies (\ref{m1}) comes from Corollary \ref{cor:locmonfp}, Proposition \ref{prop:locmonflat} and the universal property of localizations. We are then left to prove that (\ref{m1}) implies (\ref{m2}). By universal property, the map $\varphi$ splits over the monoid 
\[
\varinjlim_{a_i\in\varphi^{-1}(N^\times)} M_{a_i}=M_\mfp
\]
where $\mfp$ is $\varphi^{-1}(N\setminus N^\times)$. The  induced map $M_\mfp\ra N$ is local, and still an epimorphism. We now prove it is also flat. Suppose that $S$ is a $M_\mfp$-module. We claim that $S=S\otimes_M M_\mfp$. Indeed, the map $x\mapsto x\otimes 1$ defines an inverse of the natural map $x\otimes\frac{m}{f}\mapsto\frac{m}{f}\cdot x$.
Also, by the essential uniqueness of the adjoint functor, whenever we have a composite map of monoids $M\ra N\ra P$, then the functor $(\otimes_M N)\otimes_NP$ is canonically isomorphic to the functor $\otimes_MP$. We then write $S\otimes_MN\otimes_NP$ without using brackets, and consider it equal to $S\otimes_MP$, for any $M$-module $S$.
Now consider a finite limit $\lim S_i$ of $M_\mfp$-modules. We write $\hat{S}_i$ whenever we consider them as $M$-modules. Using the flatness of $\varphi$ and of localizations (Proposition \ref{prop:locmonflat}), we then conclude the following chain of isomorphisms
\[
\begin{aligned}
&(\lim S_i)\otimes_{M_\mfp}N=(\lim \hat{S}_i\otimes_M M_\mfp)\otimes_{M_\mfp}N=(\lim\hat{S}_i)\otimes_MM_\mfp\otimes_{M_\mfp}N=\\
&=(\lim\hat{S}_i)\otimes_MN=\lim(\hat{S}_i\otimes_MN)=\lim(\hat{S}_i\otimes_MM_\mfp\otimes_{M_\mfp}N)=\\
&=\lim(S_i\otimes_{M_\mfp}N)
\end{aligned}
\]
which proves that $M_\mfp\ra N$ is flat.

By Proposition \ref{prop:flatepifpmon}, we conclude that $M_\mfp\ra N$ is an isomorphism. Because of the finite presentation property, the identity map $N\ra M_\mfp$ has to split over some $M_a$ with $a\in\varphi^{-1}(N^\times)$. Because all the maps involved are maps of $M$-algebras, we conclude that $N=M_a$, as wanted.
 
\end{proof}

\begin{cor}\label{cor:zarimmon}
Let $\varphi\colon M\ra N$ be a map of monoids. The induced map $\Spec N\ra\Spec M$ is an open Zariski immersion in the sense of Definition \ref{df:openimm1} if and only if the induced map $\Spec_{\fu}\!\! N\ra\Spec_{\fu}\!\! M$ is an open Zariski immersion in the sense of Definition \ref{df:d-zaraff}.
\end{cor}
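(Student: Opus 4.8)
The plan is to observe that this corollary is essentially a repackaging of Theorem~\ref{thm:3opens}: the two notions of open immersion appearing in the statement are, once the relevant definitions are unwound, precisely conditions (\ref{m1}) and (\ref{m3}) of that theorem. So I would prove nothing genuinely new, but rather carefully match the definitions and then invoke the theorem.

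First I would translate the To\"en-Vaqui\'e side. By Definition~\ref{df:openimm1}, the induced map $\Spec N\ra\Spec M$ in $\Aff$ is an open immersion exactly when the corresponding arrow $\varphi\colon M\ra N$ of monoids is a flat epimorphism of finite presentation, which is verbatim condition (\ref{m1}). Next I would translate the Deitmar side: the same arrow $\varphi$ induces a morphism of monoidal spaces $\Spec_{\fu}\!\! N\ra\Spec_{\fu}\!\! M$, and since both its source and its target are affine geometrical $\fu$-schemes, requiring this morphism to be an open immersion in the sense of Definition~\ref{df:d-zaraff} is the same as requiring that $\varphi$ define an open immersion of affine geometrical $\fu$-schemes, which is verbatim condition (\ref{m3}).

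Finally, Theorem~\ref{thm:3opens} furnishes the equivalence (\ref{m1}) $\Leftrightarrow$ (\ref{m3}), which is exactly the claimed biconditional. The only point requiring any care --- and the nearest thing to an obstacle --- is checking that the phrase ``open immersion of affine geometrical $\fu$-schemes'' used in condition (\ref{m3}) genuinely coincides with the notion of open immersion in $\MS$ between the two affine objects as formalised in Definition~\ref{df:d-zaraff}; but this is immediate, since affine geometrical $\fu$-schemes are by construction objects of $\MS$ and the morphism in question is the one induced functorially by $\varphi$. All the substantive work has already been carried out in the proof of Theorem~\ref{thm:3opens}.
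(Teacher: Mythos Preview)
Your proposal is correct and matches the paper's approach: the corollary is stated without proof because it is an immediate consequence of Theorem~\ref{thm:3opens}, and you have correctly identified that the two sides of the biconditional are precisely conditions (\ref{m1}) and (\ref{m3}) of that theorem.
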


\begin{thm}
The Zariski site of affine geometrical $\fu$-schemes is equivalent to the Zariski site of $\Mon\op$.
\end{thm}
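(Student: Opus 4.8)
The plan is to realize the contravariant functor $\Spec_{\fu}$ as an equivalence of categories that transports one Grothendieck pretopology onto the other. First I would record the equivalence of the underlying categories: by Proposition~\ref{prop:specadjmon} and the remark following it, $\Spec_{\fu}$ is a contravariant equivalence between $\Mon$ and the category of affine geometrical $\fu$-schemes, and since by definition $\Aff=\Mon\op$, the assignment $\Spec A\mapsto\Spec_{\fu}A$ is an equivalence between the underlying category of the Zariski site of $\Mon\op$ and that of affine geometrical $\fu$-schemes. It then remains to check that this equivalence matches the two classes of open immersions and the two classes of Zariski coverings. For the open immersions there is nothing new to do: Corollary~\ref{cor:zarimmon} says precisely that $\Spec N\ra\Spec M$ is an open immersion in the sense of Definition~\ref{df:openimm1} if and only if $\Spec_{\fu}N\ra\Spec_{\fu}M$ is one in the sense of Definition~\ref{df:d-zaraff}, and by Theorem~\ref{thm:3opens} in either case the map is, up to isomorphism of $M$-algebras, a localization $M\ra M_{a}$ at a single element $a\in M$.

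The heart of the matter is therefore to match the two covering conditions, and I would do this by showing that a family of open immersions $\{M\ra M_{a_i}\}_{i\in I}$ is a Zariski covering in \emph{both} senses if and only if some $a_i$ is invertible, i.e. some member of the family is an isomorphism. On the Deitmar side this is immediate from the remark that the maximal ideal $\mfm=M\setminus M^\times$ lies in no basic open set other than $D(1)=\Spec_{\fu}M$: the family $\{D(a_i)\}$ is globally surjective if and only if $\mfm$ is covered, which forces some $a_i\notin\mfm$, hence $a_i\in M^\times$ and $D(a_i)=\Spec_{\fu}M$. For the To\"en--Vaqui\'e side, if some $a_i$ is a unit then $M_{a_i}=M$ and the one-element subfamily $\{i\}$ reflects isomorphisms trivially, so the family is a Zariski covering in the sense of Definition~\ref{df:openimm1}.

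The one genuinely nonformal step, which I expect to be the main obstacle, is the converse on the To\"en--Vaqui\'e side: if no element of the finite subfamily $J$ is invertible, then $\{M\ra M_{a_j}\}_{j\in J}$ must fail to reflect isomorphisms of $M$-modules. To produce the required counterexample I would introduce the ``residue'' $M$-module $\kappa\colonequals M/\!\!\sim_{\mfm}$ obtained by collapsing the whole maximal ideal $\mfm$ to a single element $[\mfm]$, so that $\kappa$ contains the two distinct points $[1]$ and $[\mfm]$ and $[\mfm]$ behaves as a zero. For $a_j\in\mfm$ one has $a_j\cdot x=[\mfm]$ for every $x\in\kappa$, so inverting $a_j$ collapses $\kappa$ entirely: the localization $\kappa\otimes_M M_{a_j}=S^{-1}\kappa$ is the one-point module $\{*\}$, which is also $\{*\}\otimes_M M_{a_j}$. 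Consequently the canonical non-isomorphism $\kappa\ra\{*\}$ becomes an isomorphism after applying $\otimes_M M_{a_j}$ for every $j\in J$, so $J$ does not reflect isomorphisms, contradicting the covering hypothesis; this forces some $a_j\in M^\times$ and completes the matching of the coverings.

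Putting the three points together, $\Spec_{\fu}$ is an equivalence of categories that carries the Zariski pretopology of $\Mon\op$ bijectively onto the Zariski pretopology of affine geometrical $\fu$-schemes, and is therefore an equivalence of the two Zariski sites. I expect the only subtle computation to be the collapse $S^{-1}\kappa=\{*\}$ above; everything else is formal once Corollary~\ref{cor:zarimmon}, Theorem~\ref{thm:3opens} and the flatness of localizations (Proposition~\ref{prop:locmonflat}) are in hand.
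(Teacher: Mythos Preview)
Your proposal is correct and follows essentially the same route as the paper's own proof: equivalence of categories via Proposition~\ref{prop:specadjmon}, matching of open immersions via Corollary~\ref{cor:zarimmon}, and the covering comparison via the test module $M/\!\!\sim_{\mfm}$ mapping to $\{*\}$, which becomes an isomorphism after tensoring with each $M_{a_j}$ when no $a_j$ is invertible. The only cosmetic differences are that the paper writes out the collapse $(M/\!\!\sim_{\mfm})\otimes_M M_{a_i}\cong\{*\}$ as an explicit chain of equalities in the tensor product rather than arguing via $a_j\cdot x=[\mfm]$, and that your reference to Proposition~\ref{prop:locmonflat} at the end is not actually needed here.
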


\begin{proof}
The two categories underneath are equivalent because of Proposition \ref{prop:specadjmon}. By the previous corollary, we also know that open immersions are the same. We have to prove that coverings are the same. Let $M$ be a monoid. In the case of affine geometrical $\fu$-schemes, coverings must include the trivial immersion $\Spec_{\fu}\!\!M\ra\Spec_{\fu}\!\!M$. We now prove that this is also true for the topology defined in \ref{df:openimm1}. Let $\{\Spec M_{a_i}\ra\Spec M\}$ be a Zariski covering. Suppose that none of these open immersions is trivial, i.e. that none of the $a_i$'s is invertible. Consider the $M$-module $M/\!\!\sim_\mfm$ where $\sim_\mfm$ identifies the non-invertible elements in $M$. We claim that $(M/\!\!\sim_\mfm)\otimes_MM_{a_i}$ is isomorphic to the trivial $M$-module $\{*\}$, for all $a_i$'s. Indeed, since $a_i$ is not invertible, we conclude the following sequence of equalities for any element $[x]\otimes\frac{m}{a_i^k}$ in $(M/\!\!\sim_\mfm)\otimes_MM_{a_i}$:
\[
[x]\otimes\frac{m}{a_i^k}=[mx]\otimes\frac{a_i}{a_i^{k+1}}=[a_i]\otimes\frac{1}{a_i^{k+1}}=[a_i^{k+1}a_i]\otimes\frac{1}{a_i^{k+1}}=[a_i]\otimes1.
\]
However, the morphism $(M/\!\!\sim_\mfm)\ra\{*\}$ is never an isomorphism, unless $M$ is the trivial group in which case the statement is obvious. We then conclude that any Zariski covering must include the trivial open immersion, as claimed.
 
\end{proof}

\begin{warning}
From now on, we will then drop the subscript when referring to affine geometrical $\fu$-schemes, and just write $\Spec M$. Also, we won't refer to any specific definition when considering open immersions of affine $\fu$-schemes. It is also legitimate to refer to the site we built on $\Mon\op$ as the \emph{Zariski site}, without specifying which definition we are using at every occurrence.
\end{warning}

\begin{lem}\label{lemma:aperti}
A map $X\ra Y$ of geometrical $\fu$-schemes is an open immersion if and only if for any affine scheme $\Spec M$ over $Y$, the induced arrow $X\times_Y\Spec M\ra \Spec M$ is an open immersion.
\end{lem}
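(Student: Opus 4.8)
The plan is to treat this as the familiar fact that open immersions are stable under base change and local on the target, now transported to $\MS$. The one geometric input I rely on repeatedly is that for a map $g\colon Z\ra Y$ in $\MS$ and an open inclusion $j\colon(U,\mco_Y|_U)\hookrightarrow Y$, the fibered product $Z\times_Y U$ is the open subscheme $(g^{-1}(U),\mco_Z|_{g^{-1}(U)})$ with the evident projections. This is immediate from the universal property: $j$ is a monomorphism, and any test object mapping compatibly to $Z$ and $U$ automatically has image inside $g^{-1}(U)$, so it factors uniquely through the open subscheme. Since every affine open $U\cong\Spec M$ of $Y$ is in particular an affine scheme over $Y$, this identifies the pullbacks appearing in the statement with preimages.

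For the ``only if'' direction, suppose $X\ra Y$ is an open immersion, so up to isomorphism it is an open inclusion $(V,\mco_Y|_V)\hookrightarrow Y$. Given any $\Spec M\ra Y$, the remark above presents $X\times_Y\Spec M$ as the preimage of $V$ in $\Spec M$ with its restricted structure sheaf, and its projection is then the open inclusion of that preimage precomposed with an isomorphism, hence an open immersion.

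For the converse, I would first reduce to a gluing statement. As $Y$ is a geometrical $\fu$-scheme, choose an affine open cover $\{U_\alpha\cong\Spec M_\alpha\}$. Applying the hypothesis to each $U_\alpha$, regarded as an affine scheme over $Y$, and using the identification of the pullback with a preimage, I obtain that each restriction $f^{-1}(U_\alpha)\ra U_\alpha$ is an open immersion. It then suffices to show that a map $f\colon X\ra Y$ that is an open immersion over every member of an open cover of $Y$ is globally an open immersion.

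This gluing is the step requiring the most care. Topologically I would verify that $f$ is a homeomorphism onto an open subset: $f$ is injective because two points with a common image lie in a single $f^{-1}(U_\alpha)$, on which $f$ is injective; the image is open because $f(X)\cap U_\alpha=f(f^{-1}(U_\alpha))$ is open for each $\alpha$; and $f$ is open onto its image because it is so on each $f^{-1}(U_\alpha)$, so the continuous bijection $X\ra f(X)$ is a homeomorphism. On structure sheaves I would argue stalkwise: for $x\in X$ with $f(x)\in U_\alpha$, the map $f^\sharp_x\colon\mco_{Y,f(x)}\ra\mco_{X,x}$ agrees with the stalk map of the open immersion $f^{-1}(U_\alpha)\ra U_\alpha$, hence is an isomorphism. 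Writing $V\colonequals f(X)$, these facts exhibit $f$ as the isomorphism $X\xrightarrow{\sim}(V,\mco_Y|_V)$---a homeomorphism inducing isomorphisms on all stalks---followed by the open inclusion $(V,\mco_Y|_V)\hookrightarrow Y$, which is exactly the definition of an open immersion. The genuinely delicate points are the identification of fibered products along open inclusions with preimages and the passage from stalkwise isomorphisms to a global isomorphism $f_*\ox\cong\mco_Y|_V$ of sheaves; the latter is routine once the sheaf axioms are used, since a morphism of sheaves that is an isomorphism on every stalk is an isomorphism.
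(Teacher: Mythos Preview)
Your proof is correct and is precisely the argument the paper is invoking by its citation of \cite{EGAI}, I.4.2.4: stability of open immersions under base change for the forward direction, and locality on the target (applied to an affine open cover of $Y$) for the converse, with the gluing carried out by checking that $f$ is a homeomorphism onto an open subset and that $f^\sharp$ is a stalkwise isomorphism. You have simply spelled out in the monoidal-space setting what the paper leaves implicit in the reference; the two points you flag as delicate (fibered products along open inclusions as preimages, and stalkwise isomorphisms of sheaves of monoids being global isomorphisms) are indeed routine and go through exactly as in the ringed-space case.
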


\begin{proof}
This follows in the same way as in \cite{EGAI}, I.4.2.4.
 
\end{proof}

\begin{prop}\label{prop:openimmftv}
Let $f\colon\mcf\ra\mcg$ be a morphism of Zariski sheaves over $\Mon\op$, and let $\mcg=h_{\Spec M}$ be affine. Then $f$ is an open immersion if and only if $\mcf$ is isomorphic over $\mcg$ to $h_U\colonequals\Hom(\cdot,U)$ where $U$ is an open geometrical $\fu$-subscheme of $\Spec M$.
\end{prop}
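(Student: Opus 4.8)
The plan is to reduce both implications to a single computation. Fix an open geometrical $\fu$-subscheme $U\subseteq\Spec M$ and write it as a union of basic opens $U=\bigcup_{i\in I}D(a_i)$ with $a_i\in M$ (possible since the $D(a)$ together with $\emptyset$ form a basis of the topology). The claim I would isolate is: \emph{the functor $h_U$, restricted to $\Aff=\Mon\op$, is a Zariski sheaf, is a subsheaf of $h_{\Spec M}$, and coincides with the image of the induced map $\coprod_{i\in I}h_{\Spec M_{a_i}}\ra h_{\Spec M}$.} Granting this, the proposition follows at once. For the ``if'' direction, each $\Spec M_{a_i}\ra\Spec M$ is an open immersion by Theorem \ref{thm:3opens}, so the isomorphism $\mcf\cong h_U$ over $h_{\Spec M}$ exhibits $f$ as an open immersion in the sense of Definition \ref{df:openimm2}. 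For the ``only if'' direction, an open immersion presents $\mcf$ as the image of some $\coprod_i h_{\Spec A_i}\ra h_{\Spec M}$ with each $\Spec A_i\ra\Spec M$ an open immersion; by Theorem \ref{thm:3opens} and Corollary \ref{cor:zarimmon} each $\Spec A_i$ is, up to isomorphism over $\Spec M$, a basic open $D(a_i)$, so taking $U=\bigcup_i D(a_i)$ and applying the claim yields $\mcf\cong h_U$.

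To prove the claim I would first compute sections. By Proposition \ref{prop:specadjmon} one has $h_{\Spec M}(\Spec N)=\Hom_{\MS}(\Spec N,\Spec M)=\Hom_{\Mon}(M,N)$. A map $g\colon\Spec N\ra\Spec M$ factors through the open subscheme $U$ if and only if it does so topologically, and here the crucial feature of spectra of monoids enters: $\Spec N$ has a unique closed point $\mfm_N=N\setminus N^\times$ whose only open neighbourhood is the whole of $\Spec N$, so by continuity $g(\Spec N)\subseteq U$ is equivalent to the single condition $g(\mfm_N)\in U$. Writing $g$ as $\varphi\in\Hom_{\Mon}(M,N)$ and setting $\mfp\colonequals\varphi^{-1}(\mfm_N)$, we have $g(\mfm_N)=\mfp$, and $\mfp\in\bigcup_i D(a_i)$ exactly when $a_i\notin\mfp$ for some $i$, i.e.\ when $\varphi(a_i)\in N^\times$ for some $i$. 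By the universal property of localization, $\Hom_{\Mon}(M_{a_i},N)=\{\varphi\in\Hom_{\Mon}(M,N)\colon\varphi(a_i)\in N^\times\}$ as a subset of $\Hom_{\Mon}(M,N)$, so the presheaf image of $\coprod_i h_{\Spec M_{a_i}}\ra h_{\Spec M}$ evaluated at $\Spec N$ is $\bigcup_i\{\varphi\colon\varphi(a_i)\in N^\times\}=h_U(\Spec N)$. Thus, as subfunctors of $h_{\Spec M}$, this presheaf image is precisely $h_U$.

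It then remains to pass from the presheaf image to the sheaf-theoretic image of Definition \ref{df:openimm2}. Since the Zariski topology on geometrical $\fu$-schemes is subcanonical (Proposition \ref{prop:zarissub}), the representable functor $\Hom(\cdot,U)$ is a sheaf on the big Zariski site; its restriction to the affine site is again a Zariski sheaf, because affine Zariski coverings are Zariski coverings and the two affine sites agree by the equivalence following Corollary \ref{cor:zarimmon}. Hence $h_U$ is a sheaf, the presheaf image computed above is already a sheaf, and it therefore coincides with the sheaf-theoretic image. This establishes the claim, and with it both implications.

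I expect the main obstacle to be the topological factorization step: justifying that ``$g$ factors through $U$'' reduces to the condition $g(\mfm_N)\in U$ on the single closed point. This is where the special geometry of the spectrum of a monoid---a unique closed point admitting no proper open neighbourhood---is indispensable, and it is what makes the otherwise routine image computation go through. A secondary point requiring care, rather than a genuine difficulty, is that the image in Definition \ref{df:openimm2} is sheaf-theoretic, so one genuinely needs $h_U$ to be a sheaf (via subcanonicity) in order to identify it with the presheaf image.
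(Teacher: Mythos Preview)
Your proof is correct and follows essentially the same outline as the paper's: both reduce the proposition to the assertion that for a family of affine open subschemes $\{\Spec M_{a_i}\}$ of $\Spec M$, the sheaf-theoretic image of $\coprod_i h_{\Spec M_{a_i}}\ra h_{\Spec M}$ is $h_U$ with $U=\bigcup_i D(a_i)$. The paper disposes of this image computation by citing \cite{mamo}, III.7.7, whereas you carry it out by hand, exploiting the fact that $\Spec N$ has a unique closed point $\mfm_N$ whose only open neighbourhood is $\Spec N$ itself; this reduces membership in $h_U(\Spec N)$ to the single condition $\varphi(a_i)\in N^\times$ for some $i$, which is exactly the presheaf image, and then subcanonicity (Proposition~\ref{prop:zarissub}) upgrades this to the sheaf image. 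Your route is thus more explicit and $\fu$-specific---it leans on the peculiar point-set topology of monoid spectra---while the paper's citation is a general topos-theoretic fact; both arrive at the same place.
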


\begin{proof}
By \cite{toenvaquie}, 2.14, this amounts to say that for a family of affine open geometrical $\fu$-subschemes $\{\Spec M_i\}$ of $\Spec M$, the image of the sheaf map \mbox{$\coprod{h_{\Spec M_i}}\ra h_{\Spec M}$} is $h_U$, where $U$ is the open geometrical $\fu$-subschemes constituted by the union of the $\Spec M_i$'s, and this is clear by \cite{mamo}, III.7.7.
 
\end{proof}

\begin{thm}\label{thm:main2}
The category of $\fu$-schemes is equivalent to the category of geometrical $\fu$-schemes.
\end{thm}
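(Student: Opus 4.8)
The plan is to construct the \emph{functor of points} $\Phi$ from geometrical $\fu$-schemes to $\Psh(\Aff)$, sending a monoidal space $X$ to the presheaf $\Phi(X)$ with $\Phi(X)(M)=\Hom_{\MS}(\Spec M,X)$ (a morphism $X\ra Y$ acting by postcomposition), and to show that $\Phi$ is an equivalence onto the full subcategory of schemes \`a la To\"en-Vaqui\'e. The starting observation is that, by Proposition \ref{prop:specadjmon}, for affine geometrical $\fu$-schemes one has $\Hom_{\MS}(\Spec M,\Spec A)\cong\Hom_{\Mon}(A,M)=h_{\Spec A}(M)$, so $\Phi$ restricts to the canonical identification of affine geometrical $\fu$-schemes with representable presheaves. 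Thus $\Phi$ is already an equivalence on affines, and the whole argument consists in propagating this along gluings.

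First I would check that $\Phi(X)$ is always a scheme \`a la To\"en-Vaqui\'e. That it is a Zariski sheaf follows from the locality of morphisms in $\MS$: given a Zariski covering $\{\Spec M\ra\Spec M_j\}$ and compatible elements of the $\Phi(X)(M_j)$, the corresponding morphisms $\Spec M_j\ra X$ glue uniquely to a morphism $\Spec M\ra X$, since on the geometrical side $\{\Spec M_j\}$ is an affine open covering of $\Spec M$ by Corollary \ref{cor:zarimmon} and the equivalence of the two Zariski sites. To see that $\Phi(X)$ is locally affine, pick an affine open covering $\{U_i=\Spec M_i\hookrightarrow X\}$; applying $\Phi$ yields maps $h_{\Spec M_i}\ra\Phi(X)$ which are open immersions of sheaves by Lemma \ref{lemma:aperti}, Proposition \ref{prop:openimmftv} and Corollary \ref{cor:zarimmon} (the relevant pullbacks against affines coincide on both sides), and whose coproduct $\coprod h_{\Spec M_i}\ra\Phi(X)$ is an epimorphism of Zariski sheaves, because every map $\Spec N\ra X$ factors locally through some $U_i$. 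Hence $\Phi(X)$ admits an affine Zariski covering.

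Next I would establish full faithfulness. By Proposition \ref{prop:zarissub}, any geometrical $\fu$-scheme $X$ is the colimit of a coequalizer diagram $\coprod\Spec A_{ijk}\rightrightarrows\coprod\Spec M_i\ra X$ of affines; since $\Phi$ carries this to the analogous diagram of representables and the Zariski topology is subcanonical, $\Phi(X)$ is the colimit of $\coprod h_{\Spec A_{ijk}}\rightrightarrows\coprod h_{\Spec M_i}$ computed in Zariski sheaves. Because both $\Hom_{\MS}(-,Y)$ and $\Hom_{\Psh(\Aff)}(-,\Phi(Y))$ turn these colimits into the same limit, and they agree term by term on affines by the Yoneda lemma together with Proposition \ref{prop:specadjmon}, one obtains $\Hom_{\MS}(X,Y)\cong\Hom_{\Psh(\Aff)}(\Phi(X),\Phi(Y))$ for all geometrical $\fu$-schemes $X,Y$.

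Finally, for essential surjectivity I would start from a scheme \`a la To\"en-Vaqui\'e $\mcf$ with an affine Zariski covering $\{h_{\Spec M_i}\ra\mcf\}$ by open immersions. The fibered products $h_{\Spec M_i}\times_{\mcf}h_{\Spec M_j}$ are open subsheaves of $h_{\Spec M_i}$, hence by Proposition \ref{prop:openimmftv} are of the form $h_{U_{ij}}$ for open geometrical $\fu$-subschemes $U_{ij}\subset\Spec M_i$; transporting this gluing datum through the equivalence of Zariski sites shows the cocycle conditions hold on the geometrical side, so the gluing lemma for monoidal spaces produces a geometrical $\fu$-scheme $X$ obtained by gluing the $\Spec M_i$ along the $U_{ij}$, and by construction $\Phi(X)$ and $\mcf$ are colimits of the same diagram of representables, whence $\Phi(X)\cong\mcf$. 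The hard part will be the middle two steps, namely verifying that $\Phi$ sends the affine-gluing presentation of $X$ to the corresponding colimit of representables in the sheaf category, and dually that the sheaf-level gluing datum of $\mcf$ can be realized geometrically. Both rest entirely on the precise matching of open immersions and coverings furnished by Theorem \ref{thm:3opens}, Corollary \ref{cor:zarimmon} and the equivalence of the two Zariski sites; once that dictionary is in place, the remaining verifications are the formal bookkeeping of descent.
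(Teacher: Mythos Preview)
Your proposal is correct and follows essentially the same route as the paper: both arguments send a geometrical $\fu$-scheme $X$ to its functor of points $h_X=\Hom_{\MS}(\Spec(-),X)$, verify (via Proposition~\ref{prop:zarissub}, Lemma~\ref{lemma:aperti}, Proposition~\ref{prop:openimmftv} and the site equivalence) that $h_X$ is a $\fu$-scheme, and then recover an arbitrary $\fu$-scheme $\mcf$ by gluing the affines in an open cover along the open subschemes $U_{ij}$ representing $h_{\Spec M_i}\times_{\mcf}h_{\Spec M_j}$. The only difference is packaging: the paper sets this up as an adjoint pair $|\cdot|\dashv h$ between $\Psh(\Aff)$ and $\MS$ (Kashiwara--Schapira, 2.7.1) and checks the unit and counit are isomorphisms on schemes, whereas you argue full faithfulness and essential surjectivity of $h$ directly; the content is identical. (Minor typo: your covering should read $\{\Spec M_j\ra\Spec M\}$, not the reverse.)
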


\begin{proof}
Since the category of monoidal spaces is cocomplete (Proposition \ref{prop:mscoc}), the inclusion $\Aff\ra\MS$ induces an adjoint pair $\PshAff\rightleftarrows\MS$ by means of \cite{schapira} Theorem 2.7.1, in which the left adjoint is the functor $|\cdot|\colon\PshAff\ra\MS$ that sends each object $\colim h_{\Spec M}$ to $\colim \Spec M$ and the right adjoint is the functor $h\colon\MS\ra\PshAff$ that sends $X$ to $h_X=\Hom(\cdot,X)$. Let now $X$ be a geometrical $\fu$-scheme, and let $\{\Spec M_i\ra X\}$ be an affine Zariski covering of it. Because the Zariski topology is subcanonical (Proposition \ref{prop:zarissub}), we conclude that $h_X$ is indeed a sheaf over $\Aff$. Fix now an affine $\fu$-scheme $h_{\Spec N}$ over $h_X$. By Lemma \ref{lemma:aperti}, the morphism $\Spec M_i\times_X\Spec N\ra\Spec N$ is an open immersion. Because of Definition \ref{df:openimm2}, Proposition \ref{prop:openimmftv}, and the fact that $h$ is a right adjoint, we can also conclude that the map 
\[
h(\Spec M_i\times_X\Spec N\ra\Spec N)=h_{\Spec M_i}\times_{h_X}h_{\Spec N}\ra h_{\Spec N}
\]
is an open immersion. This proves that each map $h_{\Spec M_i}\ra h_{\Spec M}$ is an open immersion.
Now we also prove that $\coprod h_{\Spec M_i}\ra h_X$ is an epimorphism. Indeed, let $\mcf$ be another sheaf, and let $f,g$ be maps from $h_{X}$ to $\mcf$ such that $f\varphi_i=g\varphi_i$ for every $i$. Note that, using \cite{SGAIV1} III.4, $\mcf$ can be seen not only as a sheaf over affines, but also as a sheaf over geometrical $\fu$-schemes. Hence, by Yoneda's lemma, the maps $f,g$ translate into two elements $\rho,\sigma$ in $\mcf(X)$ such that $\mcf(\varphi_i)(\rho)=\mcf(\varphi_i)(\sigma)$ for every $i$. Since $\mcf$ is a sheaf and because the $\varphi_i$'s define a covering, this implies that $\rho=\sigma$, hence $f=g$.
We then conclude that $h_X$ is a $\fu$-scheme. 

By the co-Yoneda lemma (\cite{mclane} X.6.3), we can write a presheaf of affines $\mcf$ as the colimit of the functor
\begin{align*}
\Aff_{/\mcf}&\ra\Psh(\cat)\\
(\Hom(\cdot,A)\ra \mcf)&\mapsto\Hom(\cdot,A).
\intertext{In particular, $|h_X|$ is the colimit of the functor}
\Aff_{/X}&\ra\MS\\
(A\ra X)&\mapsto A.
\end{align*}
Since affine geometrical $\fu$-schemes are dense in geometrical $\fu$-schemes, the colimit of this functor restricted to $\fu$-schemes is exactly $X$ (\cite{mclane}, X.6.2), hence there is a natural map $|h_X|\ra X$. We also know that $X$ is the colimit in $\MS$ of the gluing diagram induced by an affine open covering, which is embedded in the colimiting diagram \mbox{$\Aff_{/X}\ra \MS$}. Hence we have also a map $X\ra|h_X|$, which determines an isomorphism.

Now suppose that $\mcf$ is a $\fu$-scheme with an open affine covering $\{h_{\Spec M_i}\}$. Because $\fu$-schemes have fibered products (\cite{toenvaquie}, 2.18)
, we can also consider affine open coverings $\{h_{\Spec M_{ijk}}\}$ of the $\fu$-schemes $h_{\Spec M_i}\times_X h_{\Spec M_j}$. By \cite{mamo} IV.7.3 and \cite{mamo} A.1.1, an epimorphism of sheaves is the coequalizer of its kernel pair, and fiber products distribute over coproducts. Therefore, we conclude that $\mcf$ is the coequalizer in the diagram below.
\[
\coprod h_{\Spec M_i}\times_X h_{\Spec M_j}\rightrightarrows\coprod h_{\Spec M_i}\rightarrow\mcf
\]
Note that all these maps are open immersions. Indeed, by their very definition, open immersions are stable under affine base change, hence $h_{\Spec M_i}\times_\mcf h_{\Spec M_j}\ra h_{\Spec M_i}$ is an open immersion. In particular, by Proposition \ref{prop:openimmftv}, these maps can be written as $h_{U_{ij}}\ra h_{\Spec M_i}$ induced by open immersions $U_{ij}\ra\Spec M_i$.
We then conclude that $|\mcf|$ is the coequalizer of the diagram
\[
\coprod U_{ij}\rightrightarrows\coprod \Spec M_i\rightarrow|\mcf|
\]
so that it is a gluing of affines on open subsets, hence a geometrical $\fu$-scheme. By letting $\mcg$ be another $\fu$-scheme, we can also construct the equalizing diagram
\[
\Hom(\mcf,\mcg)\ra\coprod \Hom(h_{\Spec M_i},\mcg)\rightrightarrows\coprod \Hom(h_{\Spec M_i}\times_X h_{\Spec M_j},\mcg)
\]
and hence conclude that the Zariski topology on $\fu$-schemes is subcanonical. We can then define an inverse of the map $\mcf\ra h_{|\mcf|}$ by gluing the maps $h_{\Spec M}\ra h_{|\mcf|}$, hence $\mcf\cong h_{|\mcf|}$. This concludes the proof.
 
\end{proof}

It is easy to see that the equivalence of categories respects the topology of the two sites.

\begin{prop}\label{prop:main3}
A morphism of geometrical $\fu$-schemes is an open immersion if and only the induced morphism of $\fu$-schemes is an open immersion. Let now $X$ be a fixed geometrical $\fu$-scheme. A collection of geometrical $\fu$-schemes over $X$ is an open Zariski covering of $X$ if and only if the induced collection of $\fu$-schemes over $h_X$ is an open Zariski covering of $h_X$.
\end{prop}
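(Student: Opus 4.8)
The plan is to treat the two assertions in turn, reducing both to the affine situation where the results already established apply. For the statement on open immersions, let $f\colon X\ra Y$ be a morphism of geometrical $\fu$-schemes. By Lemma \ref{lemma:aperti}, $f$ is an open immersion if and only if for every affine scheme $\Spec M$ over $Y$ the base change $X\times_Y\Spec M\ra\Spec M$ is an open immersion. Since $h$ is a right adjoint (Theorem \ref{thm:main2}) it preserves fibered products, so $h_X\times_{h_Y}h_{\Spec M}\cong h_{X\times_Y\Spec M}$; writing $W\colonequals X\times_Y\Spec M$, Proposition \ref{prop:openimmftv} says that $h_W\ra h_{\Spec M}$ is an open immersion of sheaves precisely when $h_W$ is isomorphic over $h_{\Spec M}$ to $h_U$ for an open geometrical $\fu$-subscheme $U$ of $\Spec M$, and by the equivalence of categories this holds exactly when $W\ra\Spec M$ is an open immersion of geometrical $\fu$-schemes. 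Comparing with the affine-local characterization of open immersions of sheaves in Definition \ref{df:openimm2} (applied to $\mcg=h_Y$, which is a scheme by Theorem \ref{thm:main2}, and to the affine objects $h_{\Spec M}$ over it) gives the first claim.

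For the statement on coverings, the part concerning the individual open immersions is already settled by the first assertion, so it remains to compare joint surjectivity of a family $\{X_i\ra X\}$ of open immersions on the underlying topological spaces with the condition that $\coprod_i h_{X_i}\ra h_X$ be an epimorphism of Zariski sheaves (Definitions \ref{df:d-zaraff} and \ref{df:openimm2}). Both conditions are local on $X$: choose an affine covering $\{\Spec N_\alpha\ra X\}$. Surjectivity on topological spaces may be tested on each $\Spec N_\alpha$, while an epimorphism of sheaves is stable under pullback and may be checked after pulling back along the covering $\{h_{\Spec N_\alpha}\ra h_X\}$. Because pullback of sheaves distributes over coproducts and $h$ preserves fibered products, the pullback of $\coprod_i h_{X_i}\ra h_X$ along $h_{\Spec N_\alpha}\ra h_X$ is $\coprod_i h_{U_{i\alpha}}\ra h_{\Spec N_\alpha}$, where $U_{i\alpha}\colonequals X_i\times_X\Spec N_\alpha$ is an open subscheme of $\Spec N_\alpha$. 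We are thus reduced to an affine base $\Spec N$ and a family $\{U_i\ra\Spec N\}$ of open subschemes.

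In this affine case we use the special feature of $\fu$-geometry that $\Spec N$ has a unique closed point $\mfm=N\setminus N^\times$ and that the only open set containing it is $\Spec N$ itself, so that every Zariski covering of $\Spec N$ includes the trivial immersion. If the family is surjective on the topological space, then some $U_i$ contains $\mfm$, whence $U_i=\Spec N$ and $\coprod_i h_{U_i}\ra h_{\Spec N}$ is a split epimorphism. Conversely, if this map is an epimorphism of sheaves, then the identity section of $h_{\Spec N}(\Spec N)$ must lift locally along some covering of $\Spec N$; since every such covering contains $\Spec N$ itself, the identity already lifts over $\Spec N$, so it defines a section of the presheaf coproduct, which lies in a single $\Hom(\Spec N,U_i)$. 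The composite $\Spec N\ra U_i\hookrightarrow\Spec N$ being the identity forces $U_i=\Spec N$, so the family is surjective. The main obstacle is precisely this last equivalence: unlike the classical setting, one cannot detect epimorphisms by evaluating on residue-field points, and the argument instead rests on the triviality of Zariski coverings of an affine $\fu$-scheme, which both supplies the lift of the identity section and prevents it from distributing among several components of the coproduct.
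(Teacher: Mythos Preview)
Your argument is correct. For the first assertion you follow exactly the paper's route: both sides are characterized by affine base change (Lemma \ref{lemma:aperti} versus Definition \ref{df:openimm2}), and the affine comparison is Proposition \ref{prop:openimmftv} together with Corollary \ref{cor:zarimmon}.

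For the second assertion you take a genuinely different, more hands-on path than the paper. The paper's proof simply notes that on each side a family is a covering precisely when the target is the coequalizer of the associated diagram $\coprod X_i\times_X X_j\rightrightarrows\coprod X_i$ (effective epimorphisms in the sheaf topos on one side, the gluing lemma on the other), and the equivalence of Theorem \ref{thm:main2} transports these coequalizing diagrams. You instead localize to an affine base and exploit directly the $\fu$-specific fact that every Zariski covering of $\Spec N$ contains $\Spec N$ itself, so that the identity section must lift globally and land in a single summand. Your approach is more explicit and makes transparent \emph{why} the equivalence of coverings is almost tautological over $\fu$; the paper's approach is shorter, stays at the level of diagrams, and would transfer verbatim to the $\zz$-case. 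One small remark: when you say the lifted identity ``defines a section of the presheaf coproduct'', you are implicitly using the triviality of coverings a second time (to identify the sheaf coproduct with the presheaf coproduct over $\Spec N$); it would be cleaner to phrase the epimorphism condition as ``there is a covering of $\Spec N$ each of whose members factors through some $U_i$'', after which the triviality of the covering immediately gives a factorization $\Spec N\to U_i\hookrightarrow\Spec N$.
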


\begin{proof}
The first claim follows from the fact that open coverings in both cases can be defined via affine base change (by using Lemma \ref{lemma:aperti} and Definition \ref{df:openimm2}), and in the affine case the two notions do agree. For coverings, it suffices to write down the associate coequalizing diagrams and use the gluing lemma.
 
\end{proof}

We remark that the proofs of Theorem \ref{thm:main2} and Proposition \ref{prop:main3} can be directly generalized to the context of schemes over $\zz$, providing an alternative proof of the equivalence presented in \cite{demazuregabriel}, I.1.4.4.

\section{Base change functors}

After having defined schemes over $\fu$, the natural question is how to lift them to classical schemes over $\zz$. We want to consider this process like a base change with $\zz$ over $\fu$. This can be done starting from the functor that lifts a monoid $M$ to the ring $\zz[M]$. However, the two approaches to $\fu$-geometry we presented in the past sections have different ways to generalize this functor to arbitrary schemes. Not surprisingly, Deitmar's definition (\cite{deitmar}, Section 2) is more ``geometric'', while To\"en-Vaquié's approach (\cite{toenvaquie}, Section 2.5) is more ``functorial''. Given that the two perspectives on schemes are equivalent, we have to prove that also the two ways of base-changing are naturally equivalent.

\begin{df}
The forgetful functor $\Ring\ra\Mon$ has a left adjoint $\Mon\ra\Ring$ that sends a monoid $M$ to the ring $\zz[M]$. We indicate this functor with the notation $\otimes_{\fu}\zz$. 
\end{df}

\begin{lem}\label{lemma:openbc}
Let $\Spec N\ra\Spec M$ be an open immersion of affine schemes over $\fu$. Then the induced map \[\Spec (N\otimes_{\fu}\zz)\ra\Spec( M\otimes_{\fu}\zz)\] is an open immersion of affine schemes over $\zz$.
\end{lem}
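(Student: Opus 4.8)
The plan is to reduce to the case of a localization over a single element and then show that the base-change functor $\otimes_{\fu}\zz$ carries monoid localizations to ring localizations. First I would invoke Theorem \ref{thm:3opens}: an open immersion $\Spec N\ra\Spec M$ of affine geometrical $\fu$-schemes corresponds to a map $\varphi\colon M\ra N$ which is isomorphic as an $M$-algebra to a localization $M\ra M_a$ over a single element $a\in M$. It therefore suffices to treat the case $N=M_a$ and to prove that the induced map $\Spec(M_a\otimes_{\fu}\zz)\ra\Spec(M\otimes_{\fu}\zz)$ is an open immersion of affine schemes over $\zz$.

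The heart of the argument is the natural isomorphism of $\zz[M]$-algebras
\[
M_a\otimes_{\fu}\zz\cong(M\otimes_{\fu}\zz)_a,
\]
where on the right we localize the ring $\zz[M]$ at the image of $a$. I would establish this by universal properties. Recall from Corollary \ref{cor:locmonfp} that $M_a=M[x]/(ax=1)$. Since $\otimes_{\fu}\zz$ is a left adjoint (to the forgetful functor $\Ring\ra\Mon$) it preserves colimits, so it sends this monoidal presentation to the ring presentation $\zz[M][x]/(ax-1)$, which is precisely $\zz[M]_a$. Equivalently, one checks directly that $\zz[M]_a$ enjoys the defining universal property of $\zz[M_a]$: a ring map $\zz[M]\ra R$ inverting $a$ is the same datum as a monoid map $M\ra R$ sending $a$ to a unit, hence as a monoid map $M_a\ra R$, hence as a ring map $\zz[M_a]\ra R$.

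Finally, under this identification the map under consideration is the canonical localization $\zz[M]\ra\zz[M]_a$, whose associated morphism $\Spec(\zz[M]_a)\ra\Spec(\zz[M])$ is the inclusion of the standard basic open $D(a)$, and is therefore an open immersion of affine schemes over $\zz$. This completes the plan.

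The main obstacle, though essentially routine, is the compatibility isomorphism $\zz[M_a]\cong\zz[M]_a$; once it is in hand the conclusion is immediate. The one point requiring care is that this isomorphism must be realized as one of $M$-algebras (equivalently of $\zz[M]$-algebras), so that the map being identified is genuinely the structural localization, and not merely an abstract isomorphism of rings.
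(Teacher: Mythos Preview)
Your proposal is correct and follows essentially the same route as the paper: both reduce via Theorem~\ref{thm:3opens} to the case $N=M_a$ and then establish the $\zz[M]$-algebra isomorphism $\zz[M_a]\cong\zz[M]_a$. The paper writes down the two maps explicitly via the universal properties of the monoid and ring localizations and checks they are mutually inverse, which is your second justification unpacked; your first justification (left adjoints preserve the coequalizer presentation $M_a=M[x]/(ax=1)$) is a clean variant of the same idea.
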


\begin{proof}
By Theorem \ref{thm:3opens}, it suffices to show that, for a given element $a\in M$, there is an isomorphism
\[
M_a\otimes_{\fu}\zz=\zz[M_a]\cong\zz[M]_a\!=\!(M\otimes_{\fu}\zz)_a
\]
where the second localization is taken in the category of rings. A map $\zz[M_a]\ra\zz[M]_a$ is induced by the map of monoids $M_a\ra\zz[M]_a$, which is in turn induced by the natural map $M\ra\zz[M]_a$. A map $\zz[M]_a\ra\zz[M_a]$ is induced by the map $\zz[M]\ra\zz[M_a]$, which is in turn induced by the natural map $M\ra M_a$. It is easy to see that these two maps are inverse one of the other.
 
\end{proof}

\begin{df}\label{df:bcdei}
Let $X$ be a geometrical $\fu$-scheme and let $\{\Spec M_i\}$ be an affine covering of it. Fix now an affine open covering $\{\Spec M_{ijk}\}$ for each $\Spec M_i\times_X\Spec M_j$. By Lemma \ref{lemma:openbc}, we can define a scheme over $\zz$ by gluing the affine schemes \mbox{$\Spec(M_i\otimes_{\fu}\zz)$} over \mbox{$\Spec(M_{ijk}\otimes_{\fu}\zz)$}. The scheme over $\zz$ we obtain is called \emph{base change of $X$, with respect to the covering $\{\Spec M_{ijk}\}$}.
\end{df}

\begin{df}
As described in \cite{toenvaquie}, Section 2.5, the adjoint couple from $\Mon$ to $\Ring$ induces a functor from Zariski sheaves on affine schemes over $\zz$ to Zariski sheaves on affine schemes over $\fu$, which has a left adjoint $\otimes_{\fu}\zz$. Also, the functor $\otimes_{\fu}\zz$ is such that $\fu$-schemes are mapped to schemes. Hence, its restriction defines a functor
\[
\begin{split}
\Sch_{\fu}&\ra\Sch\\
X&\mapsto X\otimes_{\fu}\zz.
\end{split}
\]
called the \emph{base change functor}.
\end{df}

\begin{prop}
Base change of geometrical $\fu$-schemes does not depend on the covering and is canonically equivalent to base change of $\fu$-schemes.
\end{prop}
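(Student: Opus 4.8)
The plan is to exhibit both base changes as one and the same colimit, assembled from the common affine datum $M\mapsto M\otimes_{\fu}\zz$, and then to transport one construction onto the other through the functor of points. Fix coverings $\{\Spec M_i\}$ and $\{\Spec M_{ijk}\}$ as in Definition \ref{df:bcdei}. As in the proof of Proposition \ref{prop:zarissub}, the geometrical $\fu$-scheme $X$ is the coequalizer in $\MS$ of the two canonical maps
\[
\coprod \Spec M_{ijk} \rightrightarrows \coprod \Spec M_i.
\]
By Lemma \ref{lemma:openbc}, applying $\otimes_{\fu}\zz$ to this gluing datum yields a diagram of open immersions of affine $\zz$-schemes, and its coequalizer in $\Sch$ is, by definition, the geometric base change $X\otimes_{\fu}\zz$ relative to the chosen covering, namely the coequalizer of
\[
\coprod \Spec(M_{ijk}\otimes_{\fu}\zz) \rightrightarrows \coprod \Spec(M_i\otimes_{\fu}\zz).
\]

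Next I would analyze the functorial side. By Theorem \ref{thm:main2} the sheaf $h_X$ is the coequalizer in $\Psh(\Aff)$ of $\coprod h_{\Spec M_{ijk}} \rightrightarrows \coprod h_{\Spec M_i}$. The To\"en-Vaqui\'e base change $\otimes_{\fu}\zz$ is a left adjoint, hence commutes with colimits; moreover a routine Yoneda-adjunction computation, using that its right adjoint is restriction along $M\mapsto\zz[M]$, shows that it carries a representable $h_{\Spec M}$ to $h_{\Spec(M\otimes_{\fu}\zz)}$. Applying it to the coequalizer presentation of $h_X$ therefore identifies $h_X\otimes_{\fu}\zz$ with the coequalizer in $\Psh(\Aff_\zz)$ of
\[
\coprod h_{\Spec(M_{ijk}\otimes_{\fu}\zz)} \rightrightarrows \coprod h_{\Spec(M_i\otimes_{\fu}\zz)}.
\]

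Finally I would match the two sides. Since the proofs of Theorem \ref{thm:main2} and Proposition \ref{prop:main3} carry over to $\zz$-schemes, the functor of points $h$ sends the geometric coequalizer defining $X\otimes_{\fu}\zz$ to precisely the sheaf coequalizer computing $h_X\otimes_{\fu}\zz$; hence $h_{X\otimes_{\fu}\zz}\cong h_X\otimes_{\fu}\zz$, and because that Zariski topology is subcanonical, $X\otimes_{\fu}\zz$ corresponds under the equivalence to $h_X\otimes_{\fu}\zz$. As the right-hand side involves no choice of covering whatsoever, the geometric base change is independent of the covering up to canonical isomorphism, and this same isomorphism is exactly the asserted canonical equivalence between the two base-change functors.

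The step I expect to be the main obstacle is the final matching: verifying that the functor of points for $\zz$-schemes genuinely \emph{preserves} this particular gluing colimit, i.e. that the geometric coequalizer in $\Sch$ is sent to the sheaf coequalizer and not merely mapped into it. This is precisely the content of the $\zz$-analogue of Theorem \ref{thm:main2}, which we may invoke; the remaining care lies in checking that the transition open immersions base-change compatibly, so that the two gluing data are identified functorially, which is guaranteed by Lemma \ref{lemma:openbc} together with the functoriality of $\otimes_{\fu}\zz$.
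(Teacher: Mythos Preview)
Your proposal is correct and follows essentially the same route as the paper: write $X$ (equivalently $h_X$) as the coequalizer of the affine gluing diagram, use that the To\"en--Vaqui\'e base change is a left adjoint and hence preserves this coequalizer, and identify the resulting diagram with the one defining Deitmar's base change. The paper's proof is considerably terser---it simply records the coequalizer presentation and invokes left-adjointness once---whereas you spell out the matching via the $\zz$-analogue of Theorem~\ref{thm:main2} explicitly; but the underlying argument is the same. One small slip: the coequalizers you write should be taken in the category of Zariski sheaves (or of schemes) rather than in $\Psh(\Aff)$, since that is where $\otimes_{\fu}\zz$ is the asserted left adjoint; this does not affect the argument.
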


\begin{proof}
We remark that the base change functor is automatically defined from the adjoint couple from $\Mon$ to $\Ring$. Let $X$ be an arbitrary scheme over $\fu$, and let $\{\Spec M_{ijk}\}$ be coverings as in Definition \ref{df:bcdei}. We can then write $X$ as the coequalizer of an affine diagram
\[
\coprod \Spec M_{ijk}\rightrightarrows\coprod \Spec M_i\rightarrow X.
\]
Since $\otimes_{\fu}\zz$ is a left adjoint, we conclude that $X\otimes_{\fu}\zz$ is the coequalizer of the diagram
\[
\coprod \Spec( M_{ijk}\otimes_{\fu}\zz)\rightrightarrows\coprod \Spec (M_i\otimes_{\fu}\zz)\rightarrow X\otimes_{\fu}\zz
\]
which is exactly the image of $X$ via base change with respect to the fixed covering.
 
\end{proof}

We can hence summarize what we have done by saying that the part of the $\fu$-map in \cite{penalorscheid} that concerns Deitmar's and To\"en-Vaquié's schemes is correct, in the sense that both the equivalence between the two notions and the commutativity of the base change functors have been proven.

\section*{Acknowledgments}
I am deeply grateful to Professor Luca Barbieri Viale, who firstly introduced me to the ``fun of $\fu$'', and constantly encourages and enriches me with advices and teachings. I also thank Professors Bas Edixhoven and Ieke Moerdijk, whom I was honored to meet during my stay in the Netherlands, and who gave me specific support for the needs of this paper. I also thank Professor Bertrand To\"en who answered to my questions with courtesy and clarity, and who indicated to me the work of Florian Marty, and Florian Marty himself for sharing his interesting material with me, as well as his ideas on the field. 
A specific workshop of young researchers was also organized in Granada, 2009 in order to resume what had been done so far in the field of \mbox{$\fu$-geometry}, and jot down a plan for the future. I express my gratitude to the organizers and the participants for having invited me to such event, and enlightened me with interesting lectures and conversations. I would like to thank specifically Peter Arndt for having pointed out to me the result \cite{schapira} 2.7.1, and for having elucidated many other ideas. 
I also had the chance to talk about many aspects of this article in a seminar talk organized in the University of Milan. I warmly thank the organizers for this opportunity, and the participants for their interesting remarks. Specifically, I would like to thank Professor Fabrizio Andreatta, who has also been a enormous source of help and insight.

\medskip

The final publication is available at springerlink.com

\end{document}